\documentclass[11pt,english, a4paper]{article}

%% Language and font encodings
\usepackage[english]{babel}
\usepackage[utf8x]{inputenc}
\usepackage[T1]{fontenc}
\usepackage{authblk}

%% Sets page size and margins
\usepackage[a4paper,top=3cm,bottom=3cm,left=3cm,right=3cm,marginparwidth=1.75cm]{geometry}

%% Useful packages
\usepackage[colorlinks=true, allcolors=blue,hypertexnames=false]{hyperref}
\usepackage{amsmath}
\usepackage{empheq}
\usepackage{graphicx}
\usepackage[colorinlistoftodos]{todonotes}
\usepackage{amsmath}
\usepackage{mathtools}
\usepackage{upgreek}
\usepackage{amssymb}
\usepackage{amsfonts}
\usepackage{amsthm}
\usepackage{enumerate}
\usepackage{mathrsfs}
\usepackage{fontenc}
\usepackage{verbatim}
\usepackage{framed}
\usepackage{algorithm}
\usepackage{bbm}
\usepackage{algpseudocode}
\usepackage{appendix}
\usepackage{color}
\usepackage{cite}
\usepackage{epstopdf}
\usepackage{tabularx}

\numberwithin{equation}{section}
\theoremstyle{plain}
\newtheorem{thm}{Theorem}

\newtheorem{prop}[thm]{Proposition}

\newtheorem{assu}{Assumption}

\theoremstyle{definition}

\theoremstyle{remark}
\newtheorem*{rem}{Remark}

\newcommand{\transpose}{^{\operatorname{T}}}
\newcommand{\rmd}{\mathrm{d}}

\setlength {\marginparwidth}{2cm}

\title{Three algorithms for solving high-dimensional fully-coupled FBSDEs through deep learning}
\author[1]{Shaolin Ji}
\author[2]{Shige Peng}
\author[1]{Ying Peng}
\author[2]{Xichuan Zhang}
\affil[1]{Shandong University-Zhongtai Securities Institute for Financial Studies, Shandong University, 250100, China}
\affil[2]{School of Mathematics, Shandong University, 250100, China}

\begin{document}
\maketitle

\begin{abstract}
Recently, the deep learning method has been used for solving forward-backward stochastic differential equations (FBSDEs) and parabolic partial differential equations (PDEs). It has good accuracy and performance for high-dimensional problems. In this paper, we mainly solve fully coupled FBSDEs through deep learning and provide three algorithms. Several numerical results show remarkable performance especially for high-dimensional cases.
\\ \\
\textbf{Keywords} deep learning $\cdot$ fully-coupled FBSDEs $\cdot$ high-dimensional equation $\cdot$ stochastic control
\end{abstract}

\section{Introduction}\label{sec:intro}
In 1990, Pardoux and Peng proved the existence and uniqueness of the adapted solution for nonlinear BSDEs~\cite{pengBSDE1990} and then found important applications of BSDEs in finance. When a BSDE is coupled with a (forward) stochastic differential equation (SDE), the system is usually called a forward-backward stochastic differential equation (FBSDE). The FBSDE is an important tool for a wide range of application areas. For example, it can be used to solve the stochastic optimization problems, and meanwhile the stochastic optimization provides a very important way for solving many AI applications such as robotics and aerospace. In financial fields, when constructing a derivatives pricing model, it is necessary to consider the credit value adjustment (CVA) and the funding value adjustment (FVA) in many cases. However, the computation of CVA and FVA greatly increase the difficulty for financial derivatives pricing. In this instance, the FBSDE appears as an effective tool for the modeling of this kind of complex derivatives pricing.

In most situations, it is impossible to obtain an explicit solution of a FBSDE. Therefore, it is necessary to find the approximate solution. In this paper, we aim to obtain the numerical solution of the following fully-coupled FBSDE through deep learning:

\begin{equation}\label{eq:FBSDE}
\begin{cases}
X_{t} = X_{0} + \displaystyle\int_{0}^{t}b(s,X_{s},Y_{s},Z_{s})\mathrm{d}s + \int_{0}^{t}\sigma(s,X_{s},Y_{s},Z_{s}) \mathrm{d}W_{s},  \\
Y_{t} = g(X_{T}) + \displaystyle\int_{t}^{T}f(s,X_{s},Y_{s},Z_{s})\, \mathrm{d}s - \int_{t}^{T} Z_{s}\mathrm{d}W_{s}.
\end{cases}
\end{equation}

There are several ways to find the numerical solution of FBSDE \eqref{eq:FBSDE}. Based on the relationship between FBSDEs and PDEs (see \cite{Feynman_for_FBSDE}), numerical methods for solving the PDEs, such as the finite element method, the finite difference method, or the sparse grid method, can be applied to solve the FBSDEs. Ma et al studied the solvability of coupled FBSDEs and proposed a four-step approach \cite{Yong2007Forward}. Moreover, some probabilistic methods, which approximate the conditional expectation with numerical schemes, were developed to solve the FBSDEs. For example, \cite{Zhao2016Multistep} proposed a theta-scheme numerical method with high accuracy for coupled Markovian FBSDEs. \cite{time_discretize_FBSDE_Zhang} introduced a numerical scheme for coupled FBSDEs when the forward process $ X_t $ does not depend on $ Z_t $.

As is known, there is a significant difficulty for solving high dimensional BSDEs and FBSDEs, namely "curse of dimensionality". The computational complexity grows exponentially when the dimension increases, while the accuracy declines sharply. Therefore most of the aforementioned numerical methods can not deal with high-dimensional problems.

Recently, deep-learning method has achieved great success in many application areas~\cite{BengioDL}, such as computer vision, natural language processing, gaming, etc. It provides a new point of view to approximate functions and shows optimistic performance in solving problems with high-dimension features. This poses a possible way to solve the "curse of dimensionality" although the reason why deep-learning has so remarkable performance has not been proven completely.

In a recent breakthrough paper of E et al~\cite{WeinanDLforBSDE}, they solved the BSDEs from a control perspective by regarding the $Z$ term as a control. A neural network was constructed to solve high dimensional BSDEs and related PDEs. Their method has shown superior performance and accuracy on comparing with the traditional numerical methods. Lately, Han and Long~\cite{deeplearning_FBSDE} extended this method to solve FBSDEs where $ b $ and $\sigma$ in the forward SDE do not depend on the $ Z $ term and gave a posteriori error estimation.

In this paper, we solve fully-coupled FBSDEs \eqref{eq:FBSDE} by the above optimal control approach through deep neural network. Different from \cite{WeinanDLforBSDE,deeplearning_FBSDE}, we systematically explore the dependence of the term $Z$ on state precesses $X,Y$ and even $Z$ itself (in the following Algorithm 3) and propose three algorithms corresponding to different kinds of state feedback. In order to do this, we should design different cost functionals which make it possible to solve FBSDEs \eqref{eq:FBSDE} by the optimal control approach. In the first algorithm (Algorithm 1), we adopt the same state feedback form as that in Han and Long \cite{deeplearning_FBSDE}: the control $ \tilde{Z}_t $ is supposed to be dependent on the states $ \tilde{X}_t $ of the forward SDE and $ \tilde{Y}_t $ of the BSDE. We generalize it to solve \eqref{eq:FBSDE} in which $b$ and $\sigma$ depend on the $ Z $ term. In the second algorithm (Algorithm 2), we take the state $\tilde{X}_t$ as the feedback. Besides the control $ \tilde{Z}_t $, $ \tilde{Y}_t $ in the forward SDE should be regarded as a new control and denoted as $ u_t $. Both the controls $ u_t $ and $ \tilde{Z}_t $ are supposed to be dependent on the state $ \tilde{X}_t $ of the forward SDE. The price of doing this is that we must change the form of the cost function in Algorithm 1. A new penalty term is added to the cost function to punish the difference between the control $ u_t $ and the solution $ \tilde{Y}_t $ of the backward SDE. The third algorithm (Algorithm 3) is inspired by the idea of the Picard iteration (see \cite{Feynman_for_FBSDE}). Given the initial pathes $(\tilde{Y}^0,\tilde{Z}^0)$, the next iteration pathes $(\tilde{X}^{k+1},\tilde{Y}^{k+1},\tilde{Z}^{k+1})$ are dependent on the current pathes $(\tilde{Y}^{k},\tilde{Z}^{k})$ in Picard iteration. $ \tilde{Z}^{k+1} $ is supposed to be dependent on $(\tilde{X}^{k+1},\tilde{Y}^{k},\tilde{Z}^{k})$. Different from the state variable feedback in Algorithm 1 and 2, Algorithm 3 is a path-dependent iteration one which has potential applications in the calculation of FBSDE \eqref{eq:FBSDE} with random coefficients.

These three methods for solving FBSDE \eqref{eq:FBSDE} can also be widely applied to solving high-dimensional BSDEs. It is well-known that Feynman-Kac formula gives the probabilistic interpretation of the solution of linear PDEs. \cite{Peng1991Probabilistic} and related literatures obtained generalized Feynman-Kac formulas which establish the relationship between FBSDEs and nonlinear PDEs. The corresponding PDE of FBSDE \eqref{eq:FBSDE} is
\begin{equation*}
	\begin{cases}
		-\rmd u = \Big\{\sum_{i,j}\dfrac{1}{2}[\sigma\sigma\transpose(t,x,u,v)]_{i,j}\dfrac{\partial^2 u}{\partial x_i\partial x_j} + \dfrac{\partial u}{\partial x}b(t,x,u,v) \\
		\displaystyle\qquad + \sum_i\nabla v_i\sigma_i\transpose(t,x,u,v) + f(t,x,u,v) \Big\}\rmd t - v\rmd W_t,   \\
		v(t,x)=\nabla u\sigma(t,x,u,v), \qquad x\in \mathbb{R}^n,\\
	\end{cases}
\end{equation*}
with terminal condition $u(T,x) = g(x)$.

Consequently, from the numerical results, all the three algorithms can approximate the solution of the FBSDE \eqref{eq:FBSDE} and perform well in high-dimensional cases. As shown in the examples in Section 5, the relative errors of these algorithms are less than 1\%.

The remainder of this paper is organized as following. In Section 2, we firstly present some preliminaries on FBSDEs and in particular, give the existence and uniqueness conditions of fully-coupled FBSDEs. In Section 3, the relationship between FBSDEs and an optimal control problem are presented, which indicates that the FBSDEs can be solved from a control perspective. According to different kinds of state feedback, we propose three optimal control methods for solving FBSDE \eqref{eq:FBSDE}. In Section 4, we present our numerical schemes and the corresponding algorithms and show the neural network architecture. Section 5 gives some examples and shows the comparison among different algorithms for solving coupled FBSDEs.

\section{Preliminaries on FBSDEs}\label{sec:Preliminary}

In this section, we mainly introduce the form of FBSDEs and the existence and uniqueness conditions of fully-coupled FBSDEs \cite{Peng1999Fully}.

Let $ T>0 $ and $ (\Omega,\mathcal{F},\mathbb{P}, \mathbb{F}) $ be a filtered probability space, where $ W:[0,T] \times \Omega \rightarrow \mathbb{R} $ is a $ d $-dimensional standard Brownian motion on $ (\Omega,\mathcal{F},\mathbb{P}) $, $ \mathbb{F}=\{\mathcal{F}_{t}\}_{0\leq t\leq T} $ is the natural filtration generated by the Brownian motion $ W $. $X_{0}\in \mathcal{F}_{0}$ is the initial condition for the FBSDE.

Considering the fully-coupled FBSDE \eqref{eq:FBSDE}, where $ \{X_{t}\}_{0\leq t\leq T},\{Y_{t}\}_{0\leq t\leq T},\{Z_{t}\}_{0\leq t\leq T} $ are $ \mathbb{F} $-adapted stochastic processes taking value in $ \mathbb{R}^{n}, \mathbb{R}^{m}, \mathbb{R}^{m\times d} $, respectively. The functions
\begin{align*}
b: &\Omega\times [0,T]\times \mathbb{R}^{n}\times \mathbb{R}^{m}\times \mathbb{R}^{m\times d} \rightarrow \mathbb{R}^{n}\\
\sigma: &\Omega\times [0,T]\times \mathbb{R}^{n}\times \mathbb{R}^{m}\times \mathbb{R}^{m\times d} \rightarrow \mathbb{R}^{n\times d}\\
f: &\Omega\times [0,T]\times \mathbb{R}^{n}\times \mathbb{R}^{m}\times \mathbb{R}^{m\times d} \rightarrow \mathbb{R}^{m}\\
g: &\Omega\times [0,T]\times \mathbb{R}^{n} \rightarrow \mathbb{R}^{m}
\end{align*}
are deterministic globally continuous functions. $ b $ and $ \sigma $ are the \textit{drift} coefficient and \textit{diffusion} coefficient of $ X $ respectively, and $ f $ is referred to as the \textit{generator} of the coupled FBSDE. If there is a triple $ (X_{t}, Y_{t}, Z_{t}) $ satisfies the above FBSDE on $ [0, T] $, $ \mathbb{P} $-almost surely, square integrable and $ \mathcal{F}_{t} $-adapted, the triple $ (X_{t}, Y_{t}, Z_{t}) $ are called the solutions of FBSDE (\ref{eq:FBSDE}). When functions $ b $ and $\sigma$ are independent of both $ Y $ and $ Z $, FBSDE (\ref{eq:FBSDE}) is called a decoupled FBSDE.

It is well known that in BSDE theory, even if all coefficients satisfy Lipschitz condition, fully coupled FBSDE does not necessarily have solutions. So we have to give some more assumptions.

Given a $ m\times n $ full-rank matrix $ G $, we define
\begin{empheq}{align*}
u = \left( \begin{array}{c} x \\ y \\z \end{array} \right), \ A(t, u)=\left(\begin{array}{c}
-G^{T}f \\Gb\\ G\sigma
\end{array}\right)(t,u),
\end{empheq}
where $ G\sigma=(G\sigma_{1}\cdots G\sigma_{d}) $.

Firstly, we give two assumptions as the following,

\begin{assu} \label{assu:1}
        \begin{enumerate}[(i)]
          \item $ A(t,u) $ is uniformly Lipschitz with respect to $ u $;
		  \item $ A(\cdot,u) $ is in $ M^{2}(0,T) $, $ \forall u $;
		  \item $ g(x) $ is uniformly Lipschitz with respect to $ x \in \mathbb{R}^{n} $;
		  \item $ g(x) $ is in $ L^{2}(\Omega,\mathcal{F}_{T},\mathbb{P}) $, $ \forall x $.
        \end{enumerate}
\end{assu}
\begin{assu} \label{assu:2}
		\begin{align*}
	    \left\langle A(t,u)-A(t,\bar{u}), u-\bar{u} \right\rangle & \leq - \beta_{1}|G\hat{x}|^{2} - \beta_{2}(|G^{T}\hat{y}|^{2}+|G^{T}\hat{z}|^{}),\\
	    \left\langle g(x)-g(\bar{x}), G(x-\bar{x}) \right\rangle & \geq \mu_{1}|G\hat{x}|^{2},
	    \end{align*}
	    $$ \forall u =(x,y,z), \bar{u}=(\bar{x},\bar{y},\bar{z}), \hat{x} = x-\bar{x}, \hat{y} = y-\bar{y}, \hat{z} = z-\bar{z}, $$
	    where $ \beta_{1},\beta_{2} $ and $ \mu_{1} $ are given nonnegative constants with $ \beta_{1}+\beta_{2}>0,\mu_{1}+\beta_{2}>0. $
\end{assu}

Then, the existence and uniqueness theorem of FBSDEs was obtained in \cite{Peng1999Fully} and \cite{Hu1995Solution}.

\begin{thm}\label{thm:1}
	Let Assumptions~\ref{assu:1} and \ref{assu:2} hold, then there exists a unique adapted solution $ (X,Y,Z) $ of FBSDE \eqref{eq:FBSDE}.
\end{thm}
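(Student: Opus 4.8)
The standard route to this result is the method of continuation (homotopy), and this is the approach I would follow. The plan is to embed the target equation \eqref{eq:FBSDE} into a one-parameter family of FBSDEs indexed by $\alpha\in[0,1]$, in which the coupling coefficients are multiplied by $\alpha$ and supplemented by given forcing terms, so that at $\alpha=1$ we recover \eqref{eq:FBSDE} while at $\alpha=0$ the forward equation no longer depends on $(Y,Z)$ and the system decouples. The decoupled endpoint $\alpha=0$ is solvable by classical means: the forward SDE is solved first under the Lipschitz and integrability parts of Assumption~\ref{assu:1}, and the backward equation is then solved by the Pardoux--Peng theory for BSDEs. The whole argument then reduces to showing that unique solvability propagates from $\alpha=0$ to $\alpha=1$ along the parameter.

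The heart of the matter is an a priori estimate extracted from the monotonicity in Assumption~\ref{assu:2}. Given two solutions $(X,Y,Z)$ and $(\bar X,\bar Y,\bar Z)$ of the family at the same parameter $\alpha$, I would apply It\^o's formula to the pairing $\langle G\hat X_t,\hat Y_t\rangle$, where $\hat X=X-\bar X$, $\hat Y=Y-\bar Y$, $\hat Z=Z-\bar Z$, and integrate over $[0,T]$. Since $\hat X_0=0$ and $\hat Y_T=g(X_T)-g(\bar X_T)$, the drift, diffusion and generator contributions assemble exactly into $\langle A(t,u)-A(t,\bar u),u-\bar u\rangle$, so the dissipativity estimate contributes $-\beta_1|G\hat X|^2-\beta_2(|G^{\operatorname{T}}\hat Y|^2+|G^{\operatorname{T}}\hat Z|^2)$, while the terminal term is bounded below by $\mu_1|G\hat X_T|^2$ through the monotonicity of $g$. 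Combining these and invoking the conditions $\beta_1+\beta_2>0$ and $\mu_1+\beta_2>0$ yields a bound on the $M^2$-norm of $(\hat X,\hat Y,\hat Z)$ in terms of the data; uniqueness for \eqref{eq:FBSDE} is then immediate on taking identical data.

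With the a priori estimate in hand, I would prove the continuation step: there is a fixed $\delta_0>0$, depending only on the Lipschitz and monotonicity constants but \emph{not} on $\alpha$, such that unique solvability at parameter $\alpha_0$ implies unique solvability at $\alpha_0+\delta$ for every $\delta\in[0,\delta_0]$. The mechanism is a contraction mapping: given a current guess for the additional $\delta$-coupling, one solves the $\alpha_0$-system with that guess frozen inside the forcing terms, which defines a map on $M^2(0,T)$; the a priori estimate shows this map is a contraction once $\delta\le\delta_0$, and its unique fixed point solves the $(\alpha_0+\delta)$-system. Because $\delta_0$ is uniform in $\alpha_0$, finitely many such steps carry solvability from $\alpha=0$ to $\alpha=1$, which completes the proof.

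I expect the main obstacle to be the a priori estimate itself — specifically, organizing the It\^o computation so that the coupled drift, diffusion and generator terms collapse precisely into $\langle A(t,u)-A(t,\bar u),u-\bar u\rangle$, and then handling the two genuinely different regimes permitted by Assumption~\ref{assu:2}: the case $\beta_1>0$, where dissipativity in the forward variable drives the estimate, versus the case $\mu_1>0$ with $\beta_2>0$, where the terminal monotonicity and the dissipativity in $(Y,Z)$ must be balanced against each other. Securing a $\delta_0$ that is genuinely independent of $\alpha_0$ in both regimes is the delicate point on which the entire continuation argument rests.
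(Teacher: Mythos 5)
Your proposal is correct and follows essentially the same route as the paper's source for this result: the paper gives no proof of its own but defers to Theorem 2.6 of \cite{Peng1999Fully}, whose argument is exactly the method of continuation you describe, with the a priori estimate obtained by applying It\^o's formula to $\langle G\hat{X}_t,\hat{Y}_t\rangle$ and invoking the monotonicity of $A$ and $g$ from Assumption~\ref{assu:2}. Your identification of the two regimes ($\beta_1>0$ versus $\mu_1>0,\ \beta_2>0$) and of the $\alpha$-uniform step size $\delta_0$ as the delicate points matches the structure of that reference.
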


Readers are referred to Theorem 2.6 of \cite{Peng1999Fully} for the proof in detail.

For convenience, in this article, we assume that $ L $ is the Lipschitz constant satisfying Assumption \ref{assu:1} , that is $ \forall x,x',y,y',z,z' $
\begin{align*}
|l(t,x,y,z)-l(t,x',y',z')|  \leq &L(|x-x'|+|y-y'|+|z-z'|),\\
|g(x)-g(x')| \leq & L(|x-x'|),
\end{align*}
where $ l $ represents one of the functions among $ f,b $ and $\sigma$.

\section{Solving FBSDEs from an optimal control perspective}\label{sec:contr_persp}

Essentially, a deep neural network considered in this paper can be regarded as a control system, which is used to approximate the mapping from the input set to the label set. The parameters in the network can be seen as the control, and the cost function can be seen as the optimization objective. Thus, we firstly transform the problem of solving FBSDE into three optimal control problems with different kinds of feedback control.

\subsection{Case 1: Feedback control based on \texorpdfstring{$ (X,Y) $}{}}\label{subsec:3.1}

In the first case, we extend the method in \cite{WeinanDLforBSDE,deeplearning_FBSDE} for solving FBSDE \eqref{eq:FBSDE}. Consider the following variational problem:
\begin{align}
\inf_{Y_{0},\{Z_{t}\}_{0\leq t \leq T}} \mathbb{E}\left[|g(X_{T}^{Y_{0},Z}) - Y_{T}^{Y_{0},Z}|^{2}\right], \label{eq:objectiveFBSDE_1}
\end{align}
\begin{align*}
s.t.~~ X_{t}^{Y_{0},Z} = X_{0} & +
\int_{0}^{t}b(s,X_{s}^{Y_{0},Z},Y_{s}^{Y_{0},Z},Z_{s})\, \mathrm{d}s
 +  \int_{0}^{t}\sigma(s,X_{s}^{Y_{0},Z},Y_{s}^{Y_{0},Z},Z_{s})\, \mathrm{d}W_{s},
\notag \\
\qquad Y_{t}^{Y_{0},Z} = Y_{0}  - &
\int_{0}^{t}f(s,X_{s}^{Y_{0},Z},Y_{s}^{Y_{0},Z},Z_{s})\, \mathrm{d}s + \int_{0}^{t} Z_{s} \mathrm{d}W_{s},
\notag
\end{align*}
where $ Y_{0} $ is $ \mathcal{F}_{0} $-measurable random variable valued in $ \mathbb{R}^{m} $ and $ Z_{t} $ is a $ \mathcal{F}_t $-adapted and square-integrable process. The couple $ (Y_0, \{Z_t\}_{0\leq t\leq T}) $ is regarded as the control of variational problem \eqref{eq:objectiveFBSDE_1} and $ Z(\cdot) $ is a feedback control based on $ ( X_t^{Y_{0},Z}, Y_t^{Y_{0},Z}) $.

\begin{prop}\label{pro:1}
    Let Assumption \ref{assu:1} and \ref{assu:2} hold. Then the optimal control problem \eqref{eq:objectiveFBSDE_1} satisfies
	\begin{align}\label{condition:alg2}
	\inf_{Y_{0},\{Z_{t}\}_{0\leq t \leq T}} \mathbb{E}\left[|g(X_{T}^{Y_{0},Z}) - Y_{T}^{Y_{0},Z}|^{2}\right]=0,
	\end{align}
	and can be achieved. Moreover, the corresponding triple $ (X_{t}^{Y_{0},Z},Y_{t}^{Y_{0},Z},Z_{t}) $ is the unique solution of FBSDE \eqref{eq:FBSDE}.
\end{prop}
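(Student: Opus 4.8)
The plan is to prove the statement by establishing a one-to-one correspondence between the zero-cost controls of the variational problem \eqref{eq:objectiveFBSDE_1} and the solutions of the FBSDE \eqref{eq:FBSDE}, and then invoking Theorem~\ref{thm:1}. Since the objective is the expectation of a nonnegative quantity, we trivially have $\inf \geq 0$; the whole content of \eqref{condition:alg2} is therefore that the value $0$ is attained, together with the identification of the attaining control and of the resulting triple.

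For attainability, I would start from the unique adapted triple $(X,Y,Z)$ furnished by Theorem~\ref{thm:1} and feed its own data into \eqref{eq:objectiveFBSDE_1}: take as control the time-zero value $Y_0$ of the backward component (which is $\mathcal{F}_0$-measurable and square integrable) together with the process $\{Z_t\}$ itself. The key observation is that once $Z$ is frozen to this choice, the constraint system in \eqref{eq:objectiveFBSDE_1} becomes a purely \emph{forward} SDE for the pair $(X^{Y_0,Z},Y^{Y_0,Z})$ alone, driven by $b(s,\cdot,\cdot,Z_s)$, $\sigma(s,\cdot,\cdot,Z_s)$ and $f(s,\cdot,\cdot,Z_s)$, which are uniformly Lipschitz in $(x,y)$ by the Lipschitz bound recorded just after Theorem~\ref{thm:1}. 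Standard SDE well-posedness then yields a unique strong solution, and since the true pair $(X,Y)$ already solves this forward system, the two coincide. Consequently $Y_T^{Y_0,Z}=Y_T=g(X_T)=g(X_T^{Y_0,Z})$ almost surely, so the objective equals $0$ and is attained.

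For the converse and for uniqueness, I would take \emph{any} control $(Y_0,\{Z_t\})$ with zero cost, so that $Y_T^{Y_0,Z}=g(X_T^{Y_0,Z})$ almost surely, and show the resulting triple $(X^{Y_0,Z},Y^{Y_0,Z},Z)$ solves \eqref{eq:FBSDE}. The forward component and the process $Z$ match \eqref{eq:FBSDE} by construction. For the backward component I would eliminate $Y_0$ from the forward representation $Y_t^{Y_0,Z}=Y_0-\int_0^t f\,\mathrm{d}s+\int_0^t Z_s\,\mathrm{d}W_s$ by evaluating it at $t=T$ and using the zero-cost identity; this rearranges exactly into $Y_t^{Y_0,Z}=g(X_T^{Y_0,Z})+\int_t^T f\,\mathrm{d}s-\int_t^T Z_s\,\mathrm{d}W_s$, i.e.\ the backward equation of \eqref{eq:FBSDE}. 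Thus every zero-cost control produces a genuine FBSDE solution, and Theorem~\ref{thm:1} forces that solution to be unique; combined with the previous paragraph this shows the minimizer exists, is unique, and coincides with the solution of \eqref{eq:FBSDE}.

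The main obstacle I anticipate is the attainability step: one must justify rigorously that \emph{freezing} the optimal $Z$ turns the fully coupled constraint into a well-posed forward system in $(X,Y)$ whose unique solution is precisely the true one. The Lipschitz assumption in $(x,y)$ reduces this to a routine Picard/contraction argument, but some care is needed to confirm the square integrability of the frozen input $\{Z_t\}$ and the $\mathcal{F}_0$-measurability and integrability of $Y_0$, both of which are supplied by Theorem~\ref{thm:1}. The remaining algebraic rearrangement and the appeal to the uniqueness part of Theorem~\ref{thm:1} are then immediate.
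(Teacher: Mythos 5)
Your proposal is correct and follows essentially the same route as the paper's own proof: obtain the unique triple from Theorem~\ref{thm:1}, feed its $Y_0$ and $\{Z_t\}$ into the variational problem to attain cost zero, and use the zero-cost identity plus uniqueness to identify the minimizing triple with the FBSDE solution. The only difference is that you spell out the two steps the paper leaves implicit — the well-posedness of the frozen-$Z$ forward system in $(X,Y)$ and the algebraic rearrangement of the forward representation of $Y$ into the backward equation — which strengthens rather than changes the argument.
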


\begin{proof}
    Thanks to Assumption \ref{assu:1} and \ref{assu:2} hold, the FBSDE \eqref{eq:FBSDE} has a unique solution $ (X_{t},Y_{t},Z_{t}) $. Regarding $(Y_0,Z_t)$ as the control of the variational problem \eqref{eq:objectiveFBSDE_1}, we have
    \begin{align*}
	\inf_{Y_{0},\{Z_{t}\}_{0\leq t \leq T}} \mathbb{E}\left[|g(X_{T}^{Y_{0},Z}) - Y_{T}^{Y_{0},Z}|^{2}\right]=0,
	\end{align*}
    and then the optimal control can be achieved. The corresponding triple $ (X_{t}^{Y_{0},Z},Y_{t}^{Y_{0},Z},Z_{t}) $ is the solution of FBSDE \eqref{eq:FBSDE}. Because of Assumption \ref{assu:1} and \ref{assu:2} hold, the solution is unique.
\end{proof}

The detailed iteration algorithm will be given in Section \ref{sec:numerical}.

\subsection{Case 2: Feedback controls based on \texorpdfstring{$ X $}{}}\label{subsec:3.2}

In \eqref{eq:objectiveFBSDE_1}, the initial value of process $ Y $ is regarded as a control. Now we regard the whole process $ Y $ in the forward SDE as a control, then we have the following control problem
\begin{align}
\inf_{\{u_{t}, Z_{t}\}_{0\leq t \leq T}} \mathbb{E}\left[|g(X_{T}^{u,Z}) - Y_{T}^{u,Z}|^{2}+
\int_{0}^{T}|Y_t^{u,Z}-u_t|^2\mathrm{d}t
\right], \label{eq:objectiveFBSDE_3}
\end{align}
\begin{align*}
s.t.~~ X_{t}^{u,Z} = X_{0} & +
\int_{0}^{t}b(s,X_{s}^{u,Z},u_s,Z_{s})\, \mathrm{d}s
 +  \int_{0}^{t}\sigma(s,X_{s}^{u,Z},u_s,Z_{s})\, \mathrm{d}W_{s}, \notag \\
\qquad Y_{t}^{u,Z} = u_{0}  - & \int_{0}^{t}f(s,X_{s}^{u,Z},Y_{s}^{u,Z},Z_{s})\, \mathrm{d}s + \int_{0}^{t} Z_{s} \mathrm{d}W_{s}, \notag
\end{align*}
where $ u_{t} $ and $ Z_{t} $ are two $ \mathcal{F}_t $-adapted and square-integrable processes. The $ ( \{u_t,Z_t\})_{0\leq t\leq T} $ are the controls of the variational problem \eqref{eq:objectiveFBSDE_3}. Both $ u(\cdot),Z(\cdot) $ are feedback controls based on $ X_t^{u,Z} $.

\begin{prop}\label{pro:2}
    Let Assumption \ref{assu:1} and \ref{assu:2} hold and $ u_{t} $ is an $ \mathcal{F}_t $-adapted and square-integrable process. Then the optimal control problem \eqref{eq:objectiveFBSDE_3} satisfies
	\begin{align}\label{condition:alg3}
	\inf_{\{u_{t}, Z_{t}\}_{0\leq t \leq T}} \mathbb{E}\left[|g(X_{T}^{u,Z}) - Y_{T}^{u,Z}|^{2}+
	 \int_{0}^{T}|Y_t^{u,Z}-u_t|^2\mathrm{d}t \right]=0,
	\end{align}
	and can be achieved. Moreover, the corresponding triple $ (X_{t}^{u,Z},Y_{t}^{u,Z},Z_{t}) $ is the unique solution of FBSDE \eqref{eq:FBSDE}.
\end{prop}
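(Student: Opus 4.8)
The plan is to follow the template of Proposition~\ref{pro:1}, using Theorem~\ref{thm:1} twice: once to produce an admissible control realizing cost zero, and once to identify any zero-cost triple with the unique FBSDE solution. First I would invoke Assumptions~\ref{assu:1} and \ref{assu:2} together with Theorem~\ref{thm:1} to obtain the unique adapted solution $(X_t,Y_t,Z_t)$ of FBSDE~\eqref{eq:FBSDE}.

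For attainability I would feed in the candidate control $(u_t,Z_t)=(Y_t,Z_t)$ built from this solution. Because $u_s=Y_s$, the coefficients $b(s,X^{u,Z}_s,u_s,Z_s)$ and $\sigma(s,X^{u,Z}_s,u_s,Z_s)$ of the modified forward SDE agree with those of the original one, so $X^{u,Z}_t=X_t$ by uniqueness of the Lipschitz forward SDE; the forward-written backward dynamics with initial datum $u_0=Y_0$ then reproduce $Y^{u,Z}_t=Y_t$. Hence $g(X^{u,Z}_T)=g(X_T)=Y_T=Y^{u,Z}_T$ annihilates the terminal term, while $Y^{u,Z}_t-u_t=0$ annihilates the running penalty, so the cost equals $0$ and the infimum \eqref{condition:alg3} is attained.

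The substantive direction is the converse. Vanishing of the expectation forces each nonnegative contribution to vanish, giving $g(X^{u,Z}_T)=Y^{u,Z}_T$ almost surely and $u_t=Y^{u,Z}_t$ for $dt\otimes d\mathbb{P}$-almost every $(t,\omega)$. The crucial observation is that this identity undoes the decoupling: since the integrands then agree almost everywhere, replacing $u_s$ by $Y^{u,Z}_s$ leaves both the drift and the diffusion integrals unchanged, so $X^{u,Z}$ in fact solves the genuine forward equation of \eqref{eq:FBSDE}. Evaluating the forward-written backward equation at $t$ and at $T$ and subtracting yields $Y^{u,Z}_t = Y^{u,Z}_T+\int_t^T f\,ds-\int_t^T Z_s\,dW_s$, and substituting the terminal identification $Y^{u,Z}_T=g(X^{u,Z}_T)$ recovers the backward integral equation of \eqref{eq:FBSDE}. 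Thus $(X^{u,Z}_t,Y^{u,Z}_t,Z_t)$ solves the FBSDE, and by the uniqueness in Theorem~\ref{thm:1} it coincides with $(X_t,Y_t,Z_t)$.

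The main obstacle I anticipate is rigor in the converse step: justifying that the $L^2(dt\otimes d\mathbb{P})$ vanishing of the penalty genuinely permits substituting $Y^{u,Z}$ for $u$ inside both the Lebesgue and the It\^o integrals—so that the perturbed and the original forward SDEs become indistinguishable—and confirming that the forward-written backward equation with matched terminal value is equivalent to the backward form of \eqref{eq:FBSDE}. The remaining bookkeeping follows the proof of Proposition~\ref{pro:1} almost verbatim.
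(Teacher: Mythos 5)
Your proposal is correct and follows essentially the same route as the paper's own proof, which is only a two-line sketch asserting that the unique FBSDE solution furnishes a zero-cost control and that uniqueness identifies any zero-cost triple with it. Your write-up actually supplies the details the paper omits (the substitution $u_t=Y_t^{u,Z}$ into the Lebesgue and It\^o integrals, and the equivalence of the forward-written backward equation with the backward form), so no gap to report.
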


\begin{proof}
    Thanks to Assumption \ref{assu:1} and \ref{assu:2} hold, the FBSDE \eqref{eq:FBSDE} has a unique solution $ (X_{t},Y_{t},Z_{t}) $. Regarding $(u_t,Z_t)$ as the control of the variational problem \eqref{eq:objectiveFBSDE_3}, we have
    \begin{align*}
	\inf_{\{u_{t}, Z_{t}\}_{0\leq t \leq T}} \mathbb{E}\left[|g(X_{T}^{u,Z}) - Y_{T}^{u,Z}|^{2}+
	 \int_{0}^{T}|Y_t^{u,Z}-u_t|^2\mathrm{d}t \right]=0,
	\end{align*}
    and then the corresponding triple $ (X_{t}^{u,Z},Y_{t}^{u,Z},Z_{t}) $ is the solution of FBSDE \eqref{eq:FBSDE}. Because of Assumption \ref{assu:1} and \ref{assu:2} hold, the solution is unique.
\end{proof}

\subsection{Case 3: Feedback control based on \texorpdfstring{$ (X,Y,Z) $}{} with Picard iteration method}\label{subsec:3.3}

Before introducing the Picard iteration with feedback $ (X,Y,Z) $, we first give an assumption to make sure the Picard iteration is convergent \cite{Feynman_for_FBSDE}.
\begin{assu}\label{assu:3}
	\begin{enumerate}[(i)]
		\item There exist $ \lambda_1,\lambda_2\in\mathbb{R} $ such that for all $t,x,x_1,x_2,y,y_1,y_2,z$ and a.s.,
		\begin{align*}
		\left\langle b(t,x_1,y,z)-b(t,x_2,y,z), x_1-x_2 \right\rangle \leq \lambda_1|x_1-x_2|^2,\\
		\left\langle f(t,x,y_1,z)-f(t,x,y_2,z), y_1-y_2 \right\rangle \leq \lambda_2|y_1-y_2|^2;
		\end{align*}
		\item There exist $ k,k_i>0,i=1,2,3,4 $, such that for all $t,x,x_1,x_2,y,y_1,y_2,z,z_1,z_2$ and a.s.,
		\begin{align*}
		&|b(t,x,y_1,z_1)-b(t,x,y_2,z_2)| \leq k_1|y_1-y_2|+k_2\|z_1-z_2\|,\\
		&|b(t,x,y,z)|\leq |b(t,0,y,z)|+k(1+|x|),\\
		&|f(t,x_1,y,z_1)-f(t,x_2,y,z_2)| \leq k_3|x_1-x_2|+k_4\|z_1-z_2\|,\\
		&|f(t,x,y,z)|\leq |f(t,x,0,z)|+k(1+|y|);
		\end{align*}
		\item There exist $ k_i,i=5,6,7 $, such that for all $t,x_1,x_2,y_1,y_2,z_1,z_2$ and a.s.,
		\begin{align*}
		&\|\sigma(t,x_1,y_1,z_1)-\sigma(t,x_2,y_2,z_2)\|^2\\
		\leq & k_5^2|x_1-x_2|^2 + k_6^2|y_1-y_2|^2 + k_7^2\|z_1-z_2\|^2;
		\end{align*}
		\item There exists $ k_8 $, such that for all $x_1,x_2$ and a.s.,
		\begin{align*}
		|g(x_1) - g(x_2)|\leq k_8|x_1-x_2|;
		\end{align*}
		\item The processes $ b(\cdot,x,y,z), \sigma(\cdot,x,y,z)$ and $f(\cdot,x,y,z) $ are $ \mathcal{F}_t $-adapted, and the random variable $ g(x) $ is $ \mathcal{F}_T $-measurable, for all $ (x,y,z) $. Moreover, the following holds:
		\begin{align*}
		\mathbb{E}\int_{0}^{T}|b(s,0,0,0)|^2\mathrm{d}s + \mathbb{E}\int_{0}^{T}\|\sigma(s,0,0,0)\|^2\mathrm{d}s\\
		+ \mathbb{E}\int_{0}^{T}|f(s,0,0,0)|^2\mathrm{d}s + \mathbb{E}|h(0)|^2 <\infty.
		\end{align*}
	\end{enumerate}
\end{assu}
The notations $ |\cdot| $ and $ \|\cdot\| $ are denoted as the square-root of the sum of squares of the components of a vector and a matrix, respectively. Under Assumption 3 and $\lambda_1+\lambda_2<-(k_4^2+k_5^2)^2/2$, FBSDE \eqref{eq:FBSDE} has a unique solution $ (X_{t}, Y_{t}, Z_{t}) $. Pardoux and Tang's results \cite{Feynman_for_FBSDE} have shown that \eqref{eq:FBSDE} can be constructed via Picard iteration
\begin{equation}\label{Picard_FBSDE}
\begin{cases}
X_{t}^{k+1} = X_{0} +  \displaystyle\int_{0}^{t}b(s,X_{s}^{k+1},Y_{s}^{k},Z_{s}^{k})\mathrm{d}s + \int_{0}^{t}\sigma(s,X_{s}^{k+1},Y_{s}^{k},Z_{s}^{k}) \mathrm{d}W_{s},  \\
Y_{t}^{k+1} = g(X_{T}^{k+1}) +  \displaystyle\int_{t}^{T}f(s,X_{s}^{k+1},Y_{s}^{k+1},Z_{s}^{k+1}) \mathrm{d}s - \int_{t}^{T} Z_{s}^{k+1}\mathrm{d}W_{s},
\end{cases}
\end{equation}
when $ Y^{0}, Z^{0} $ are given, $ k $ is denoted as the iteration step. Therefore, the solution $ (X_{t}^{k+1}, Y_{t}^{k+1}, Z_{t}^{k+1}) $ of the decoupled FBSDE \eqref{Picard_FBSDE} converges to to the solution $ (X_{t}, Y_{t}, Z_{t}) $ of \eqref{eq:FBSDE} when $ k $ tends to infinity, i.e.
\begin{equation}\label{Tang}
\lim\limits_{k\rightarrow \infty}\mathbb{E}\left[ \sup\limits_{0\leq t\leq T}(\Delta X_{t,k+1}^2
+ \Delta Y_{t,k+1}^2) + \int_{0}^{T}\Delta Z_{t,k+1}^2\, \mathrm{d}t \right] =0,
\end{equation}
where $\Delta X_{t,k+1} = |X_{t}^{k+1}-X_{t}|, \Delta Y_{t,k+1} =  |Y_{t}^{k+1}-Y_{t}|$, $\Delta Z_{t,k+1} = |Z_{t}^{k+1}-Z_{t}|$.

Regarding $ \tilde{Y}_{0}^{k+1} $ and $ \{\tilde{Z}_{t}^{k+1}\}_{0\leq t\leq T} $ as controls, we consider the following control problem
\begin{align}
\inf_{\tilde{Y}_0^{k+1},\{\tilde{Z}_t^{k+1}\}_{0\leq t \leq T}} \mathbb{E}\left[|g(\tilde{X}_{T}^{k+1}) - \tilde{Y}_{T}^{k+1}|^{2}\right], \label{eq:control1}
\end{align}
\begin{align}
s.t.~~ \tilde{X}_{t}^{k+1}= &X_{0} + \int_{0}^{t}b(s,\tilde{X}_{s}^{k+1},\tilde{Y}_{s}^{k},\tilde{Z}_{s}^{k})\mathrm{d}s + \int_{0}^{t}\sigma(s,\tilde{X}_{s}^{k+1},\tilde{Y}_{s}^{k},\tilde{Z}_{s}^{k}) \mathrm{d}W_{s} , \label{Picard_SDE} \\
\qquad \tilde{Y}_{t}^{k+1}= &\tilde{Y}_{0}^{k+1} - \int_{0}^{t}f(s,\tilde{X}_{s}^{k+1},\tilde{Y}_{s}^{k+1},\tilde{Z}_{s}^{k+1}) \mathrm{d}s + \int_{0}^{t} \tilde{Z}_{s}^{k+1}\mathrm{d}W_{s}, \notag
\end{align}
where $\tilde{Y}_0^{k+1}$ and $\tilde{Z}_t^{k+1}$ are taking valued in $\mathbb{R}$ and $M^2(0,T)$, respectively. The couple $ (\tilde{Y}_0^{k+1}, \{\tilde{Z}_t^{k+1}\}_{0\leq t\leq T}) $ is regarded as the controls and $ Z(\cdot) $ is a feedback control based on $ (\tilde{X}_t^{k+1},\tilde{Y}_t^{k},\tilde{Z}_t^{k}) $. In the following, we will show that control problem \eqref{eq:control1} is equivalent to FBSDE \eqref{Picard_FBSDE}.

When $ \tilde{Y}^{k}, \tilde{Z}^{k} $ are known, we consider the SDEs \eqref{Picard_SDE} in \eqref{eq:control1}, which  has infinite number of solutions because both the initial value $ \tilde{Y}_{0}^{k+1} $ and the process $ \{\tilde{Z}_{t}^{k+1}\}_{0\leq t\leq T} $ are uncertain. Given $ \tilde{Y}_{0}^{k+1} $ and the process $ \{\tilde{Z}_{t}^{k+1}\}_{0\leq t\leq T} $, \eqref{Picard_SDE} is a system of forward stochastic differential equations with initial condition $ (X_0, \tilde{Y}_{0}^{k+1}) $. Under Assumption \ref{assu:3} and $\lambda_1+\lambda_2<-(k_4^2+k_5^2)^2/2$, the equations \eqref{Picard_SDE} has a unique solution $ (\tilde{X}_{t}^{k+1},\tilde{Y}_{t}^{k+1}) $ \cite{oksendal1985stochastic} determined by $ \tilde{Y}_{0}^{k+1} $ and $ \{\tilde{Z}_{t}^{k+1}\}_{0\leq t\leq T} $.

In the following Theorem \ref{thm:main}, we will show that the solution $ (\tilde{X}_t^{k+1},\tilde{Y}_t^{k+1},\tilde{Z}_t^{k+1}) $ of \eqref{Picard_SDE} converges to the solution $ (X_t,Y_t,Z_t) $ of FBSDE \eqref{eq:FBSDE} when $ \mathbb{E}\left[|g(\tilde{X}_{T}^{k+1}) - \tilde{Y}_{T}^{k+1}|^{2}\right] $ goes to zero as $ k $ tends to infinity, which means that solving the control problem \eqref{eq:control1} is equivalent to solving the FBSDE \eqref{eq:FBSDE}.

\begin{thm}\label{thm:main}
	Suppose Assumption \ref{assu:3} holds true, $\lambda_1+\lambda_2<-(k_4^2+k_5^2)^2/2$, and there exist $ C_5 \leq 1 $, where $C_5$ is constant dependent on $L, T$ and $L$ is the upper bound of the coefficients in Assumption \ref{assu:3}. If $ \mathbb{E}\left[|g(\tilde{X}_{T}^{k+1}) - \tilde{Y}_{T}^{k+1}|^{2}\right] $ satisfies
	\begin{align*}
	\lim\limits_{k\rightarrow \infty}\mathbb{E}\left[|g(\tilde{X}_{T}^{k+1}) - \tilde{Y}_{T}^{k+1}|^{2}\right] = 0,
	\end{align*}
	for small enough $T$, then the solution of SDE \eqref{Picard_SDE} $ (\tilde{X}_{t}^{k+1}, \tilde{Y}_{t}^{k+1}, \tilde{Z}_{t}^{k+1}) $ satisfies
	\begin{align}
	\lim\limits_{k\rightarrow \infty}\mathbb{E}\left[\sup\limits_{0\leq t\leq T}(|\tilde{X}_{t}^{k+1}-X_{t}|^2+|\tilde{Y}_{t}^{k+1}-Y_{t}|^2) +\int_{0}^{T}|\tilde{Z}_{t}^{k+1}-Z_{t}|^2\mathrm{d}t\right]=0 \label{SDEconditio}
	\end{align}
	for given $(Y^0, Z^0)$, where $ (X_{t}, Y_{t}, Z_{t}) $ is the solution of FBSDE \eqref{eq:FBSDE}.
\end{thm}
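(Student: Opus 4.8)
\emph{Overall strategy.} The plan is to insert the exact Picard iterate $(X^{k+1},Y^{k+1},Z^{k+1})$ solving \eqref{Picard_FBSDE} between the controlled triple $(\tilde X^{k+1},\tilde Y^{k+1},\tilde Z^{k+1})$ of \eqref{Picard_SDE} and the true solution $(X,Y,Z)$, and to exploit the triangle inequality
\[
\mathbb{E}\Big[\sup_{0\leq t\leq T}\big(|\tilde X^{k+1}_t-X_t|^2+|\tilde Y^{k+1}_t-Y_t|^2\big)+\int_0^T|\tilde Z^{k+1}_t-Z_t|^2\,\mathrm{d}t\Big]\leq 2\hat e_{k+1}+2 d_{k+1},
\]
where $\hat e_{k+1}$ is the same functional with $(X,Y,Z)$ replaced by $(X^{k+1},Y^{k+1},Z^{k+1})$ and $d_{k+1}$ is exactly the quantity shown to vanish in \eqref{Tang}. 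Since $d_{k+1}\to0$, it suffices to prove $\hat e_{k+1}\to0$, and I would obtain this from a contraction-type recursion $\hat e_{k+1}\leq C_5\hat e_k+C'\epsilon_{k+1}$ driven by the control residual $\epsilon_{k+1}:=\mathbb{E}\big[|g(\tilde X^{k+1}_T)-\tilde Y^{k+1}_T|^2\big]$, which tends to $0$ by hypothesis.

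\emph{Forward estimate.} First I would compare the two forward equations: $\tilde X^{k+1}$ and $X^{k+1}$ obey the same dynamics but with coefficient inputs $(\tilde Y^k,\tilde Z^k)$ and $(Y^k,Z^k)$ respectively. Applying It\^o's formula to $|\tilde X^{k+1}_t-X^{k+1}_t|^2$ and using the one-sided Lipschitz (monotonicity) bound on $b$ in $x$, the Lipschitz bounds of $b$ and $\sigma$ from Assumption \ref{assu:3}, Gronwall's inequality and the Burkholder--Davis--Gundy inequality, I expect an estimate of the form
\[
\mathbb{E}\Big[\sup_{0\leq t\leq T}|\tilde X^{k+1}_t-X^{k+1}_t|^2\Big]\leq C(T)\,\hat e_k,
\]
the constant $C(T)$ depending on $T$ and on the coupling coefficients $k_1,k_2,k_5,k_6,k_7$.

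\emph{Backward estimate.} Next I would rewrite the forward-run $\tilde Y^{k+1}$ from \eqref{Picard_SDE} in backward form, so that $\delta Y:=\tilde Y^{k+1}-Y^{k+1}$ and $\delta Z:=\tilde Z^{k+1}-Z^{k+1}$ solve a BSDE with terminal value $\delta Y_T=\tilde Y^{k+1}_T-g(X^{k+1}_T)$ and generator $f(\cdot,\tilde X^{k+1},\tilde Y^{k+1},\tilde Z^{k+1})-f(\cdot,X^{k+1},Y^{k+1},Z^{k+1})$. Writing $\delta Y_T=\big(\tilde Y^{k+1}_T-g(\tilde X^{k+1}_T)\big)+\big(g(\tilde X^{k+1}_T)-g(X^{k+1}_T)\big)$ bounds the terminal term by $\epsilon_{k+1}$ together with $k_8^2\,\mathbb{E}|\tilde X^{k+1}_T-X^{k+1}_T|^2$. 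A standard a priori BSDE estimate---It\^o on $|\delta Y_t|^2$, the monotonicity of $f$ in $y$, the Lipschitz bounds of $f$ in $(x,z)$, and Young's inequality to absorb $\int_0^T|\delta Z_t|^2\,\mathrm{d}t$ onto the left-hand side---then yields
\[
\hat e_{k+1}\leq C'\epsilon_{k+1}+C''\,\mathbb{E}\Big[\sup_{0\leq t\leq T}|\tilde X^{k+1}_t-X^{k+1}_t|^2\Big]\leq C_5\hat e_k+C'\epsilon_{k+1},
\]
the last inequality following from the forward estimate.

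\emph{Conclusion and main obstacle.} Given the recursion, the hypothesis $C_5\leq1$ (valid for $T$ small) makes the iteration a contraction, so $\epsilon_{k+1}\to0$ forces $\hat e_k\to0$; combined with $d_{k+1}\to0$ from \eqref{Tang} this delivers \eqref{SDEconditio}. The step I expect to be hardest is precisely securing the contraction constant $C_5\leq1$: the increment $\delta Z$ enters the forward diffusion (via $k_7$) and the backward driver (via $k_4$) with no intrinsically small prefactor, so the $\int_0^T|\delta Z_t|^2\,\mathrm{d}t$ contributions can only be controlled by balancing them against the dissipation furnished by the monotonicity condition $\lambda_1+\lambda_2<-(k_4^2+k_5^2)^2/2$ and by choosing $T$ small. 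Bookkeeping these constants so that their product remains below $1$ is the technical heart of the argument.
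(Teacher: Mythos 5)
Your proposal is correct and follows essentially the same route as the paper: both hinge on the recursion $a_{k+1}\leq C_5\,a_k + C\,\epsilon_{k+1}$ between the controlled triple and the exact Picard iterate of \eqref{Picard_FBSDE}, the geometric-series argument to conclude $a_k\to 0$, and the final triangle inequality with Pardoux--Tang's convergence \eqref{Tang}. The only cosmetic difference is that the paper routes the comparison through an auxiliary process $(\hat X^{k+1},\hat Y^{k+1},\hat Z^{k+1})$ in two steps, whereas you obtain the same bound in one pass by splitting the terminal value $\tilde Y_T^{k+1}-g(X_T^{k+1})$; note also that, as you correctly sense and as the paper's own proof tacitly uses, the conclusion genuinely requires $C_5<1$ rather than the $C_5\leq 1$ stated in the theorem.
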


\begin{proof}
    The proof of this theorem is divided into two steps.
	\begin{itemize}
		\item[Step 1:] Supposing the following equation
		\begin{equation}\label{Picard_FBSDEHat}
		\begin{cases}
		\hat{X}_{t}^{k+1} = X_{0} + \displaystyle \int_{0}^{t}b(s,\hat{X}_{s}^{k+1},\tilde{Y}_{s}^{k},\tilde{Z}_{s}^{k})\mathrm{d}s + \int_{0}^{t}\sigma(s,\hat{X}_{s}^{k+1},\hat{Y}_{s}^{k},\hat{Z}_{s}^{k}) \mathrm{d}W_{s},  \\
		\hat{Y}_{t}^{k+1} = g(\hat{X}_{T}^{k+1}) + \displaystyle \int_{t}^{T}f(s,\hat{X}_{s}^{k+1},\hat{Y}_{s}^{k+1},\hat{Z}_{s}^{k+1}) \mathrm{d}s - \int_{t}^{T} \hat{Z}_{s}^{k+1}\mathrm{d}W_{s},
		\end{cases}
		\end{equation}
		has a solution $ (\hat{X}_t^{k+1},\hat{Y}_t^{k+1},\hat{Z}_t^{k+1}) $. Let
			\begin{align*}
			\delta X_t^{k+1} &= \hat{X}_t^{k+1} - X_t^{k+1}, \\
			\delta Y_t^{k+1} &= \hat{Y}_t^{k+1} - Y_t^{k+1}, \\
			\delta Z_t^{k+1} &= \hat{Z}_t^{k+1} - Z_t^{k+1}, \\
			\delta Y_T^{k+1} &= g(\hat{X}_T^{k+1})-g(X_T^{k+1}), \\
			\delta b_t &= b(t,\hat{X}_{t}^{k+1},\tilde{Y}_{t}^{k},\tilde{Z}_{t}^{k}) - b(t,X_{t}^{k+1},Y_{t}^{k},Z_{t}^{k}), \\
			\delta \sigma_t &= \sigma(t,\hat{X}_{t}^{k+1},\tilde{Y}_{t}^{k},\tilde{Z}_{t}^{k}) - \sigma(t,X_{t}^{k+1},Y_{t}^{k},Z_{t}^{k}), \\
			\delta f_t &= f(t,\hat{X}_{t}^{k+1},\hat{Y}_{t}^{k+1},\hat{Z}_{t}^{k+1}) - f(t,X_{t}^{k+1},Y_{t}^{k+1},Z_{t}^{k+1}).
			\end{align*}
			
			From \eqref{Picard_FBSDE} and \eqref{Picard_FBSDEHat}, we get
			\begin{align*}
			\delta X_t^{k+1} &= \displaystyle \int_{0}^{t}\delta b_s \mathrm{d}s - \int_{0}^{t} \delta \sigma_s\mathrm{d}W_{s},\\
			\delta Y_t^{k+1} &= \delta Y_T^{k+1} + \displaystyle \int_{t}^{T}\delta f_s \mathrm{d}s - \int_{t}^{T} \delta Z_s^{k+1}\mathrm{d}W_{s},
			\end{align*}
			whose differential form is
			\begin{align*}
			\mathrm{d}\delta X_t^{k+1} &= \delta b_t \mathrm{d}t + \delta \sigma_t\mathrm{d}W_{t},\\
			-\mathrm{d}\delta Y_t^{k+1} &= \delta f_t \mathrm{d}t - \delta Z_t^{k+1}\mathrm{d}W_{t}.
			\end{align*}
			Plugging Ito's formula into $ |\delta X_t^{k+1}|^2 $,
			\begin{align*}
			\mathrm{d}|\delta X_t^{k+1}|^2 &= 2\delta X_t^{k+1}\cdot\mathrm{d}\delta X_t^{k+1}+\mathrm{d}\delta X_t^{k+1}\cdot\mathrm{d}\delta X_t^{k+1}\\
			&= 2\delta X_t^{k+1}(\delta b_t \mathrm{d}t + \delta \sigma_t\mathrm{d}W_{t})-|\delta \sigma_t|^2\mathrm{d}t,
			\end{align*}
			integrating from $ 0 $ to $ t $,
			\begin{align*}
			|\delta X_t^{k+1}|^2 &= 2\int_{0}^{t}\delta X_s^{k+1}(\delta b_s \mathrm{d}t + \delta \sigma_s\mathrm{d}W_{s}) + \int_{0}^{t}|\delta \sigma_s|^2\mathrm{d}s,
			\end{align*}
			and taking the expectation
			\begin{align*}
			\mathbb{E}|\delta X_t^{k+1}|^2 &= \mathbb{E}\left[\int_{0}^{t}(2\delta X_s^{k+1}\delta b_s+|\delta \sigma_s|^2) \mathrm{d}s\right]\\
			&\leq 2\mathbb{E}\left[\int_{0}^{t}|\delta X_s^{k+1}|L(|\delta X_s^{k+1}|+|\tilde{Y}_{t}^{k}-Y_{t}^{k}| + |\tilde{Z}_{t}^{k}-Z_{t}^{k}|)\mathrm{d}s\right]\\
			&~~~+\mathbb{E}\left[\int_{0}^{t}L^2(|\delta X_s^{k+1}|+|\tilde{Y}_{t}^{k}-Y_{t}^{k}| + |\tilde{Z}_{t}^{k}-Z_{t}^{k}|)^2\mathrm{d}s\right]\\
			&\leq \mathbb{E}\left[\int_{0}^{t}((2L+L+L)|\delta X_s^{k+1}|^2+L|\tilde{Y}_{t}^{k}-Y_{t}^{k}|^2 + L|\tilde{Z}_{t}^{k}-Z_{t}^{k}|^2)\mathrm{d}s\right]\\
			&~~~+\mathbb{E}\left[\int_{0}^{t}3L^2(|\delta X_s^{k+1}|^2+|\tilde{Y}_{t}^{k}-Y_{t}^{k}|^2 + |\tilde{Z}_{t}^{k}-Z_{t}^{k}|^2)\mathrm{d}s\right]\\
			&= (4L+3L^2)\mathbb{E}\left[\int_{0}^{t}|\delta X_s^{k+1}|^2] + (L+3L^2)\mathbb{E}[\int_{0}^{t}|\tilde{Y}_{t}^{k}-Y_{t}^{k}|^2 + |\tilde{Z}_{t}^{k}-Z_{t}^{k}|^2\mathrm{d}s\right]\\
			&\leq (4L+3L^2)\mathbb{E}\left[\int_{0}^{t}|\delta X_s^{k+1}|^2] + (L+3L^2)\mathbb{E}[\int_{0}^{T}|\tilde{Y}_{t}^{k}-Y_{t}^{k}|^2 + |\tilde{Z}_{t}^{k}-Z_{t}^{k}|^2\mathrm{d}s\right],
			\end{align*}
			then we have
			\begin{align}
			\mathbb{E}|\delta X_t^{k+1}|^2 &\leq(L+3L^2)\mathbb{E}\left[\int_{0}^{T}|\tilde{Y}_{t}^{k}-Y_{t}^{k}|^2 + |\tilde{Z}_{t}^{k}-Z_{t}^{k}|^2\mathrm{d}s\right]\cdot e^{(4L+3L^2)T} \notag\\
			&=C_1 \mathbb{E}\left[\int_{0}^{T}|\tilde{Y}_{t}^{k}-Y_{t}^{k}|^2 + |\tilde{Z}_{t}^{k}-Z_{t}^{k}|^2\mathrm{d}s\right], \label{eq:Yestimate}
			\end{align}
			based on the Gronwall inequality.

			Similarly, we have
			\begin{align*}
			-\mathrm{d}|\delta Y_t^{k+1}|^2 &= -2\delta Y_t^{k+1}\cdot\mathrm{d}\delta Y_t^{k+1}-\mathrm{d}\delta Y_t^{k+1}\cdot\mathrm{d}\delta Y_t^{k+1}\\
			&= 2\delta Y_t^{k+1}(\delta f_t \mathrm{d}t - \delta Z_t^{k+1}\mathrm{d}W_{t})-|\delta Z_t^{k+1}|^2\mathrm{d}t.
			\end{align*}
			Integrating from $ t $ to $ T $,
			\begin{align*}
			|\delta Y_t^{k+1}|^2 + \displaystyle \int_{t}^{T}|\delta Z_s^{k+1}|^2\mathrm{d}s &= |\delta Y_T^{k+1}|^2+2\int_{t}^{T}\delta Y_s^{k+1}(\delta f_s \mathrm{d}t - \delta Z_s^{k+1}\mathrm{d}W_{s}),
			\end{align*}
			and taking the expectation
			\begin{align*}
			&\mathbb{E}\left[|\delta Y_t^{k+1}|^2 + \displaystyle \int_{t}^{T}|\delta Z_s^{k+1}|^2\mathrm{d}s\right]\\
			= &\mathbb{E} |\delta Y_T^{k+1}|^2+2\mathbb{E}\left[\int_{t}^{T}\delta Y_s^{k+1}\delta f_s \mathrm{d}s\right]\\
			\leq &\mathbb{E} |\delta Y_T^{k+1}|^2 + 2\mathbb{E}\left[\int_{t}^{T}|\delta Y_s^{k+1}|L(|\delta X_s^{k+1}|+|\delta Y_s^{k+1}| + |\delta Z_s^{k+1}|)\mathrm{d}s\right]\\
			\leq &\mathbb{E} |\delta Y_T^{k+1}|^2 + \mathbb{E}\left[\int_{t}^{T}L(|\delta X_s^{k+1}|^2+|\delta Y_s^{k+1}|^2)\mathrm{d}s\right] + \mathbb{E}\left[\int_{t}^{T}2L|\delta Y_s^{k+1}|^2\mathrm{d}s\right]\\
			&+ \mathbb{E}\left[\int_{t}^{T} 2L^2|\delta Y_s^{k+1}|^2 +\dfrac{1}{2}|Z_s^{k+1}|^2\mathrm{d}s\right]\\
			\leq &\mathbb{E} |\delta Y_T^{k+1}|^2 + L\cdot C_1\mathbb{E}\left[\int_{0}^{T}|\tilde{Y}_{t}^{k}-Y_{t}^{k}|^2 + |\tilde{Z}_{t}^{k}-Z_{t}^{k}|^2\mathrm{d}s\right]\\
			&+(3L+2L^2)\mathbb{E}\left[\int_{t}^{T}|\delta Y_s^{k+1}|^2\mathrm{d}s\right]+\mathbb{E}\left[\int_{t}^{T} \dfrac{1}{2}|Z_s^{k+1}|^2\mathrm{d}s\right],
			\end{align*}
			then we have
			\begin{align}\label{eq:YZestimate}
			&\mathbb{E}\left[|\delta Y_t^{k+1}|^2 + \dfrac{1}{2}\displaystyle \int_{t}^{T}|\delta Z_s^{k+1}|^2\mathrm{d}s\right]\notag
			\\
			\leq &\mathbb{E}|\delta Y_T^{k+1}|^2 + L\cdot C_1\mathbb{E}\left[\int_{0}^{T}|\tilde{Y}_{t}^{k}-Y_{t}^{k}|^2 + |\tilde{Z}_{t}^{k}-Z_{t}^{k}|^2\mathrm{d}s\right]\notag\\
			&+(3L+2L^2)\mathbb{E}\left[\int_{t}^{T}|\delta Y_s^{k+1}|^2\mathrm{d}s\right],
			\end{align}
			thus
			\begin{align*}
			\mathbb{E}|\delta Y_t^{k+1}|^2 \leq &\mathbb{E}\left[L^2|\delta X_T^{k+1}|^2\right] + L\cdot C_1\mathbb{E}\left[\int_{0}^{T}|\tilde{Y}_{t}^{k}-Y_{t}^{k}|^2 + |\tilde{Z}_{t}^{k}-Z_{t}^{k}|^2\mathrm{d}s\right]\notag\\
			&+(3L+2L^2)\mathbb{E}\left[\int_{t}^{T}|\delta Y_s^{k+1}|^2\mathrm{d}s\right]\\
			\leq&(L+L^2)C_1\mathbb{E}\left[\int_{0}^{T}|\tilde{Y}_{t}^{k}-Y_{t}^{k}|^2 + |\tilde{Z}_{t}^{k}-Z_{t}^{k}|^2\mathrm{d}s\right]\\
			&+(3L+2L^2)\mathbb{E}\left[\int_{t}^{T}|\delta Y_s^{k+1}|^2\mathrm{d}s\right].
			\end{align*}
			Based on the Gronwall inequality, we get
			\begin{align}
			\mathbb{E}|\delta Y_t^{k+1}|^2 &\leq C_1(L+L^2)\mathbb{E}\left[\int_{0}^{T}|\tilde{Y}_{t}^{k}-Y_{t}^{k}|^2 + |\tilde{Z}_{t}^{k}-Z_{t}^{k}|^2\mathrm{d}s\right]\cdot e^{(2L^2+2L)T} \notag\\
			&= C_2\mathbb{E}\left[\int_{0}^{T}|\tilde{Y}_{t}^{k}-Y_{t}^{k}|^2 + |\tilde{Z}_{t}^{k}-Z_{t}^{k}|^2\mathrm{d}s\right] \label{eq:Yestimate123},
			\end{align}
			then
			\begin{align}
			\mathbb{E}\left[\displaystyle \int_{0}^{T}|\delta Y_t^{k+1}|^2\mathrm{d}t\right] &\leq \displaystyle \int_{0}^{T}\mathbb{E}|\delta Y_t^{k+1}|^2\mathrm{d}t \notag\\
			&\leq \displaystyle \int_{0}^{T}C_3\mathbb{E}\left[\int_{0}^{T}|\tilde{Y}_{t}^{k}-Y_{t}^{k}|^2 + |\tilde{Z}_{t}^{k}-Z_{t}^{k}|^2\mathrm{d}s\right]\mathrm{d}t \notag\\
			&= TC_2\mathbb{E}\left[\int_{0}^{T}|\tilde{Y}_{t}^{k}-Y_{t}^{k}|^2 + |\tilde{Z}_{t}^{k}-Z_{t}^{k}|^2\mathrm{d}s\right]\notag \\
			&= C_3\mathbb{E}\left[\int_{0}^{T}|\tilde{Y}_{t}^{k}-Y_{t}^{k}|^2 + |\tilde{Z}_{t}^{k}-Z_{t}^{k}|^2\mathrm{d}s\right] \label{eq:Yestimate2}.
			\end{align}
			Similarly, we get
			\begin{align*}
			\mathbb{E}\left[\displaystyle \int_{0}^{T}|\delta Z_t^{k+1}|^2\mathrm{d}t\right] &\leq C_3\mathbb{E}\left[\int_{0}^{T}|\tilde{Y}_{t}^{k}-Y_{t}^{k}|^2 + |\tilde{Z}_{t}^{k}-Z_{t}^{k}|^2\mathrm{d}s\right],
			\end{align*}
			and
			\begin{align*}
			\mathbb{E}\left[\displaystyle \int_{0}^{T}(|\delta Y_t^{k+1}|^2 + |\delta Z_t^{k+1}|^2)\mathrm{d}t\right] &\leq 2C_3\mathbb{E}|\delta Y_T^{k+1}|^2.
			\end{align*}
			Note that
			\begin{align*}
			\delta Y_t^{k+1} = \delta Y_T^{k+1} + \displaystyle \int_{t}^{T}\delta f_s \mathrm{d}s - \int_{t}^{T} \delta Z_s^{k+1}\mathrm{d}W_{s},
			\end{align*}
			then
			\begin{align*}
			\sup\limits_{0\leq t\leq T}|\delta Y_t^{k+1}|^2 \leq 3|\delta Y_T^{k+1}|^2 + 3\sup\limits_{0\leq t\leq T}|\displaystyle \int_{t}^{T}\delta f_s \mathrm{d}s|^2 +3\sup\limits_{0\leq t\leq T}|\int_{t}^{T} \delta Z_s^{k+1}\mathrm{d}W_{s}|^2.
			\end{align*}
			As
			\begin{align*}
			\sup\limits_{0\leq t\leq T}|\displaystyle \int_{t}^{T}\delta f_s \mathrm{d}s|^2 &\leq (\int_{0}^{T}|\delta f_s| \mathrm{d}s)^2 \leq (\int_{0}^{T}L^2(|\delta X_s^{k+1}| + |\delta Y_s^{k+1}| + |\delta Z_s^{k+1}|) \mathrm{d}s)^2\\
			&\leq 3L^2\int_{0}^{T}(|\delta X_s^{k+1}|^2 + |\delta Y_s^{k+1}|^2 + |\delta Z_s^{k+1}|^2) \mathrm{d}s,
			\end{align*}
			we have
			\begin{align*}
			\mathbb{E}\left[\sup\limits_{0\leq t\leq T}|\delta Y_t^{k+1}|^2\right] &\leq 3\mathbb{E}|\delta Y_T^{k+1}|^2 + 9L^2\mathbb{E}\left[\int_{0}^{T}(|\delta X_s^{k+1}|^2 + |\delta Y_s^{k+1}|^2 + |\delta Z_s^{k+1}|^2) \mathrm{d}s\right]\\
			&~~~+3\mathbb{E}\left[\sup\limits_{0\leq t\leq T}|\int_{t}^{T} \delta Z_s^{k+1}\mathrm{d}W_{s}|^2\right]\\
			&\leq(3L^2C_1+9L^2(C_1+2C_3)+3C_3)\mathbb{E}\left[\int_{0}^{T}|\tilde{Y}_{t}^{k}-Y_{t}^{k}|^2 + |\tilde{Z}_{t}^{k}-Z_{t}^{k}|^2\mathrm{d}s\right]\\
			&=C_4\mathbb{E}\left[\int_{0}^{T}|\tilde{Y}_{t}^{k}-Y_{t}^{k}|^2 + |\tilde{Z}_{t}^{k}-Z_{t}^{k}|^2\mathrm{d}s\right],
			\end{align*}
			then we get
			\begin{align*}
			&\mathbb{E}\left[\sup\limits_{0\leq t\leq T}|\hat{Y}_{t}^{k+1}-Y_{t}^{k+1}|^2 +\int_{0}^{T}|\hat{Z}_{t}^{k+1}-Z_{t}^{k+1}|^2\mathrm{d}t\right] \\
			\leq &(C_4+C_3)\mathbb{E}\left[\int_{0}^{T}|\tilde{Y}_{t}^{k}-Y_{t}^{k}|^2 + |\tilde{Z}_{t}^{k}-Z_{t}^{k}|^2\mathrm{d}s\right]\\
			\leq &(C_4+C_3)(T+1)\mathbb{E}\left[\sup\limits_{0\leq t\leq T}|\tilde{Y}_{t}^{k}-Y_{t}^{k}|^2 +\int_{0}^{T}|\tilde{Z}_{t}^{k}-Z_{t}^{k}|^2\mathrm{d}t\right]\\
			= &C_5\mathbb{E}\left[\sup\limits_{0\leq t\leq T}|\tilde{Y}_{t}^{k}-Y_{t}^{k}|^2 +\int_{0}^{T}|\tilde{Z}_{t}^{k}-Z_{t}^{k}|^2\mathrm{d}t\right].
			\end{align*}
	\item[Step 2:] Given $ \tilde{Y}_{0}^{k+1} $ and $ \{\tilde{Z}_{t}^{k+1}\}_{0\leq t\leq T} $, we have
	\begin{align}
	\tilde{Y}_{t}^{k+1}& = \tilde{Y}_{0}^{k+1} - \displaystyle \int_{0}^{t}f(s,\tilde{X}_{s}^{k+1},\tilde{Y}_{s}^{k+1},\tilde{Z}_{s}^{k+1}) \mathrm{d}s + \int_{0}^{t} \tilde{Z}_{s}^{k+1}\mathrm{d}W_{s}\notag\\
	& = \tilde{Y}_{T}^{k+1} + \displaystyle \int_{t}^{T}f(s,\tilde{X}_{s}^{k+1},\tilde{Y}_{s}^{k+1},\tilde{Z}_{s}^{k+1}) \mathrm{d}s - \int_{t}^{T} \tilde{Z}_{s}^{k+1}\mathrm{d}W_{s}\label{Picard_FSDEHat}
	\end{align}
	with its solution $ (\tilde{X}_{t}^{k+1}, \tilde{Y}_{t}^{k+1}, \tilde{Z}_{t}^{k+1}) $. By using the similar method of proof with Step 1, we get
		\begin{align}\label{tildehat}
		\mathbb{E}\left[\sup\limits_{0\leq t\leq T}|\tilde{Y}_{t}^{k+1}-\hat{Y}_{t}^{k+1}|^2 +\int_{0}^{T}|\tilde{Z}_{t}^{k+1}-\hat{Z}_{t}^{k+1}|^2\mathrm{d}t\right]
		\leq C\mathbb{E}\left[|\tilde{Y}_T^{k+1} - g(\tilde{X}_T^{k+1})|^2\right],
		\end{align}
	for a constant $ C $. Then
	\begin{align*}
	&\mathbb{E}\left[\sup\limits_{0\leq t\leq T}|\tilde{Y}_{t}^{k+1}-Y_{t}^{k+1}|^2 +\int_{0}^{T}|\tilde{Z}_{t}^{k+1}-Z_{t}^{k+1}|^2\mathrm{d}t\right] \\
	\leq&\mathbb{E}\left[\sup\limits_{0\leq t\leq T}|\tilde{Y}_{t}^{k+1}-\hat{Y}_{t}^{k+1}|^2 +\int_{0}^{T}|\tilde{Z}_{t}^{k+1}-\hat{Z}_{t}^{k+1}|^2\mathrm{d}t\right]\\
	&+\mathbb{E}\left[\sup\limits_{0\leq t\leq T}|\hat{Y}_{t}^{k+1}-Y_{t}^{k+1}|^2 +\int_{0}^{T}|\hat{Z}_{t}^{k+1}-Z_{t}^{k+1}|^2\mathrm{d}t\right]\\
	\leq &C\mathbb{E}\left[|\tilde{Y}_T^{k+1} - g(\tilde{X}_T^{k+1})|^2\right]+C_5\mathbb{E}\left[\sup\limits_{0\leq t\leq T}|\tilde{Y}_{t}^{k}-Y_{t}^{k}|^2 +\int_{0}^{T}|\tilde{Z}_{t}^{k}-Z_{t}^{k}|^2\mathrm{d}t\right].
	\end{align*}
	
	Denote that
	\begin{align*}
	a_{k+1}&=\mathbb{E}\left[\sup\limits_{0\leq t\leq T}|\tilde{Y}_{t}^{k+1}-Y_{t}^{k+1}|^2 +\int_{0}^{T}|\tilde{Z}_{t}^{k+1}-Z_{t}^{k+1}|^2\mathrm{d}t\right], \\
	b_{k+1}&=C\mathbb{E}\left[|\tilde{Y}_T^{k+1} - g(\tilde{X}_T^{k+1})|^2\right],
	\end{align*}
	we have the following iterative relationships
	\begin{align*}
	a_{k+1} &\leq b_{k+1} + C_5a_{k}\\
	&\leq b_{k+1} + C_5b_{k} + C_5^{2}a_{k-1}\\
	&\leq b_{k+1} + C_5b_{k} + C_5^{2}b_{k-1} + \cdots + C_5^{k+1}b_0,
	\end{align*}
	where $ b_0 = a_0 $. Note that $ C_5<1 $ and $ \lim\limits_{k\rightarrow \infty}b_{k+1} = 0 $, we can easily proof that
	\begin{align}
	 \lim\limits_{k\rightarrow \infty}a_{k+1} = \lim\limits_{k\rightarrow \infty}\mathbb{E}\left[\sup\limits_{0\leq t\leq T}|\tilde{Y}_{t}^{k+1}-Y_{t}^{k+1}|^2+\int_{0}^{T}|\tilde{Z}_{t}^{k+1}-Z_{t}^{k+1}|^2\mathrm{d}t\right] = 0. \label{eq:limits_a}
	\end{align}
	Combining it with \eqref{Tang}, equation \eqref{SDEconditio} holds. By \eqref{eq:Yestimate}, we have
	\begin{align*}
	 \mathbb{E}|\delta X_t^{k+1}|^2 &\leq C_1 \mathbb{E}\left[\int_{0}^{T}|\tilde{Y}_{t}^{k}-Y_{t}^{k}|^2 + |\tilde{Z}_{t}^{k}-Z_{t}^{k}|^2\mathrm{d}s\right]\\
	 &\leq C_1\mathbb{E}\left[T\sup\limits_{0\leq t\leq T}|\tilde{Y}_{t}^{k}-Y_{t}^{k}|^2+\int_{0}^{T}|\tilde{Z}_{t}^{k}-Z_{t}^{k}|^2\mathrm{d}s\right]\\
	 &\leq C_1'\mathbb{E}\left[\sup\limits_{0\leq t\leq T}|\tilde{Y}_{t}^{k}-Y_{t}^{k}|^2+\int_{0}^{T}|\tilde{Z}_{t}^{k}-Z_{t}^{k}|^2\mathrm{d}s\right].
	\end{align*}
	Combining it with \eqref{eq:limits_a}, we can easily prove that \eqref{SDEconditio} holds.
	\end{itemize}
\end{proof}

\begin{rem}
	The first two methods do not need Picard iteration but requires Assumption \ref{assu:1} and \ref{assu:2}. The third method uses the Picard iteration idea, thus Assumption \ref{assu:3} needs to be satisfied in order to obtain convergence. It can handle the situations when both $ T $ and Lipschitz coefficients are small.
\end{rem}

\begin{rem}
	In case 1, we regard $Z(\cdot)$ as the feedback control of the state processes $X(\cdot),Y(\cdot)$ and only consider the initial value $Y_0$ as the control. In case 2, in order to get the optimal approximation for the whole process $Y$, we introduce a new control $u(\cdot)$ and add a new error term $|Y(\cdot)-u(\cdot)|^2$ in the cost function. When the final cost function approximate to 0, $u(\cdot)$ is exactly the state process $Y(\cdot)$. It means that we can directly get $Y(\cdot)$ at any time $t\in[0,T]$ through the trained network. Both of case 1 and case 2 do not consider the effect of process $Z(\cdot)$, while case 3 takes it into consideration. In case 3, the value of  $Z(\cdot)$ in the last iteration is used as the input of the current iteration.
\end{rem}

\section{Numerical schemes and algorithms for fully-coupled FBSDEs}\label{sec:numerical}

In Section \ref{sec:contr_persp}, FBSDE \eqref{eq:FBSDE} is transformed to different optimal control problems. Inspired by \cite{WeinanDLforBSDE} and \cite{deeplearning_FBSDE}, we use forward neural networks to simulate the control process $ Z $ and $ u $ mentioned in Section \ref{sec:contr_persp}. The universal approximation theorem \cite{approximation1989cybenko} has shown that a feed-forward network containing a finite number of neurons can approximate continuous functions for a given accuracy.

In this section,  we propose three algorithms. In the first algorithm, the inputs of the network are states $ X $ and $ Y $, and the output is $ Z $. In the second algorithm, double control processes are employed, the state $ X $ is taken as the input of the networks, the controls $ u $ and $ Z $ are taken as the output of the networks, respectively. In the third algorithm, the three items $ (X,Y,Z) $ are taken as the input of the network and the output of network is $ Z $. Here we uniformly define the feedback function as $\phi$ and assume that $\phi$ has some good properties to ensure that the discrete-time scheme of the stochastic process is convergent, according to the work of Kloeden \cite{Numerical1992Kloeden}. In our future work, we will establish concrete assumptions that $\phi$ needs to satisfy.

We firstly give the notations of the discrete time and the Brownian motion. Let $\pi$ be a partition of the time interval $ [0,T] $, where $ 0 = t_{0}<t_{1}<t_{2}<\cdots<t_{N-1}<t_{N} =T $. We define $ \Delta t_{i}=t_{i+1}-t_{i} $ and $ \Delta W_{t_{i}}=W_{t_{i+1}}-W_{t_{i}} $, where $ W_{t_{i}} \sim \mathcal{N}(0,t_{i}) $, for $ i = 0, 1, 2,\cdots, N-1 $. For different optimal control problems in Section \ref{sec:contr_persp}, we can give their corresponding discrete-time schemes.

We construct a fully connected neural network at each time point to approximate the control $ Z $. We use $ \theta_i $ to represent the parameters of the feed-forward neural network at time $ t_{i} $. We denote $ \theta = \{\theta_i\} _{0\leq i\leq N-1} $ and define $ Y_0^{k+1,\pi} $ as a parameter. The ReLU activation function and Adam stochastic gradient descent-type algorithm are adopted in the neural network. The definitions of the loss function are different according to different control problems. The network parameters are updated by back propagation (BP), and then the optimal parameters can be found. The algorithms first update the parameters from the last layer to the first layer of the network at time point $ t_{N-1} $, and then update the parameters in the reverse direction of time until time $ t_0 $.
\begin{figure}[H]
	\centering
	\includegraphics[scale=0.9]{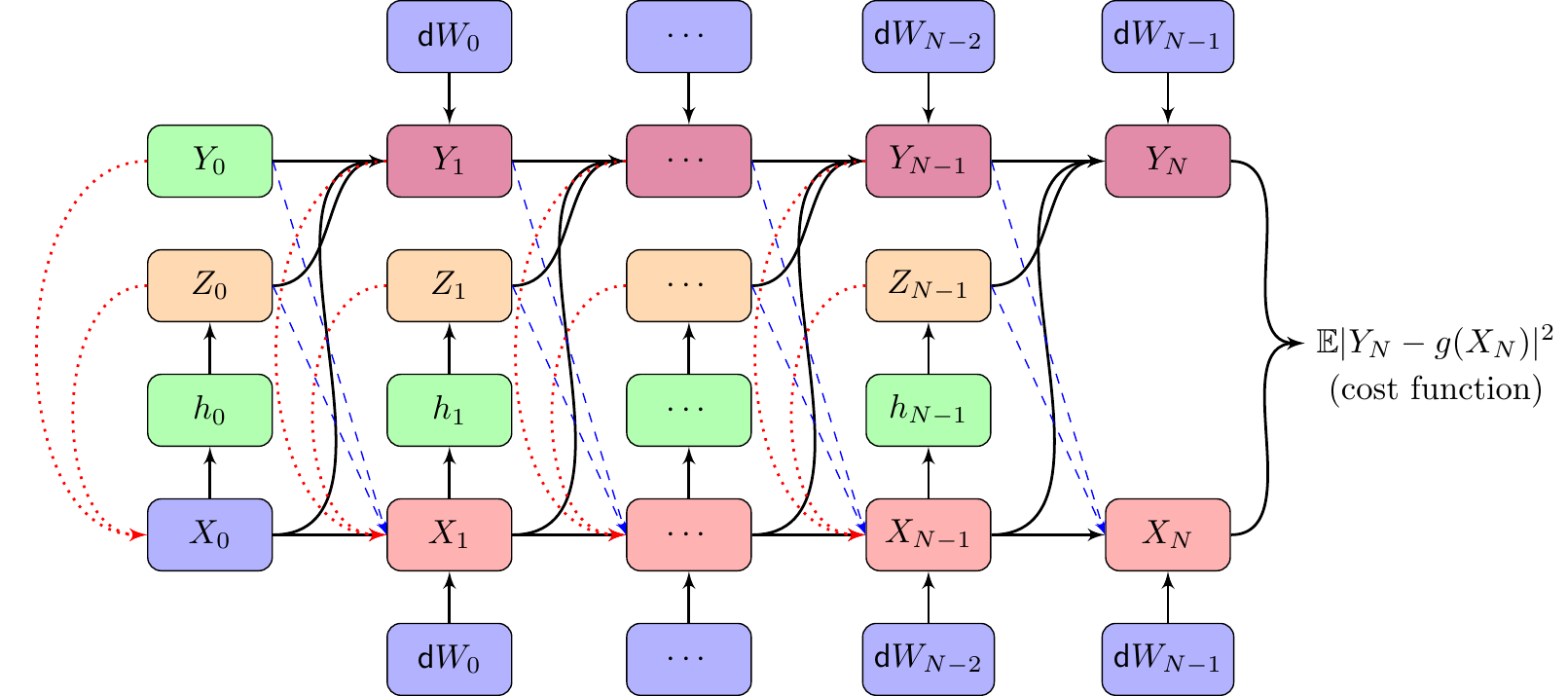}
	\caption{The neural network for Algorithm \ref{alg:1}.}
	\label{fig:whole_net}
\end{figure}

Here we take Algorithm 3 as an example, and show the neural network architecture in Figure \ref{fig:whole_net}. As shown in Figure \ref{fig:whole_net}, the solid lines represent the data flow generated in the current iteration, the dashed blue lines and dotted red lines represent respectively the data flow and neural network inputs that use the data of the previous iteration. $h_i$ represents the hidden layers and $Z_i$ represents the outputs. $Y_0$ and  the weights of the hidden layers are trainable parameters.

For convenience, the time interval $ [0,T] $ is partitioned evenly, i.e. $ \Delta t_{i} = t_{i+1} - t_{i} = T/N $ for all $ i = 0,1,2,\cdots,N-1 $. We define $ \Delta W_{i} = W_{i+1} - W_{i} $ and denote the iteration step by $ k $ which is marked by superscript in the algorithm. In the following subsections, we present these three algorithms for different kinds of feedback control in detail.

\subsection{Algorithm 1: Feedback control based on \texorpdfstring{$ (X,Y) $}{}}

In \cite{deeplearning_FBSDE}, coupled FBSDEs where the forward SDE does not depend on $Z_t$ have been studied. The values $ (X_{t_i}, Y_{t_i}, Z_{t_i})_{0\leq i\leq N-1} $ are calculated step by step in time, i.e. the triple $ (X_{t_i}, Y_{t_i}, Z_{t_i}) $ of the current time-step is used to calculate the triple $ (X_{t_{i+1}}, Y_{t_{i+1}}, Z_{t_{i+1}}) $ of the next time-step. We extend this idea to solve FBSDE \eqref{eq:FBSDE}.

For the control problem \eqref{eq:objectiveFBSDE_1}, the discrete-time scheme is
\begin{equation}\label{eq:Eular_scheme_part2}
\begin{cases}
X_{t_{i+1}}^{\pi} = X_{t_{i}}^{\pi}  + b(t_{i},X_{t_{i}}^{\pi} ,Y_{t_{i}}^{\pi} ,Z_{t_{i}}^{\pi} )\Delta t_{i} + \sigma(t_{i},X_{t_{i}}^{\pi} ,Y_{t_{i}}^{\pi} ,Z_{t_{i}}^{\pi} ) \Delta W_{t_{i}}, \\
Y_{t_{i+1}}^{\pi}  = Y_{t_{i}}^{\pi}  - f(t_{i},X_{t_{i}}^{\pi} ,Y_{t_{i}}^{\pi} ,Z_{t_{i}}^{\pi} ) \Delta t_{i}  + Z_{t_{i}}^{\pi}  \Delta W_{t_{i}},\\
X_0^{\pi} = X_0, Y_0^{\pi} = Y_0,
\end{cases}
\end{equation}
because the process $ \{X_{t_i}^{\pi},Y_{t_i}^{\pi},Z_{t_i}^{\pi}\}_{0\leq i\leq N-1}  $ is Markovian, $ Z_{t_{i}}^{\pi} $ should be represented as a function of $ (X_{t_i}^{\pi},Y_{t_i}^{\pi},Z_{t_i}^{\pi}) $
\begin{equation}\label{Z_path_simu}
Z_{t_{i}}^{\pi} = \phi_{i}(X_{t_{i}}^{\pi}, Y_{t_{i}}^{\pi}, Z_{t_{i}}^{\pi}).
\end{equation}

However, when simulating the function $ \phi_{i} $ by the neural network, , $ Z_{t_{i}}^{\pi} $ cannot be used as both input and output of the network. Notice that \eqref{Z_path_simu} is an implicit function,  assuming that its explicit form is
\begin{equation}\label{Z_path_explicit}
Z_{t_{i}}^{\pi} = \phi_{i}'(X_{t_{i}}^{\pi}, Y_{t_{i}}^{\pi}) = \phi'(X_{t_{i}}^{\pi}, Y_{t_{i}}^{\pi};\theta_{i}),
\end{equation}
which only depends on $ X_{t_{i}}^{\pi} $ and $ Y_{t_{i}}^{\pi} $. Though both of the functions $ \phi_{i} $ and $ \phi_{i}' $ are unknown , the objective of network estimation changes from approximating $ \phi_{i} $ to approximating $ \phi_{i}' $. The network contains four layers including one $(n+m) $-dim input layer, two hidden $(n+m)+10 $-dim layers and a $(m\times d) $-dim output layer. The loss function is defined as
\begin{align*}
loss = \dfrac{1}{2M}\sum\limits_{i=0}^{M}|Y_T^{k+1,\pi}-g(X_T^{k+1,\pi})|^2,
\end{align*}
where $ M $ is the number of samples.
\begin{algorithm}[H]
	\caption{Feedback control based on $ (X,Y) $}
	\label{alg:2}
	\begin{algorithmic}[1]
		\Require The Brownian motion $ \Delta W_{t_i} $, initial parameters $ (\theta^0,Y_0^{0,\pi}) $, learning rate $ \eta $;
		\Ensure The couple precess $ (X_{T}^{\pi},Y_{T}^{\pi}) $.
		\For { $ k = 0 $ to $ maxstep $}
		\State $ X_{0}^{k,\pi} = X_{0} $, $ Y_{0}^{k,\pi} = Y_{0}^{k,\pi}; $
		\For { $ i = 0 $ to $ N-1 $}
		\State $Z_{t_{i}}^{k,\pi} = \phi(X_{t_{i}}^{k,\pi}, Y_{t_{i}}^{k,\pi};\theta_{i}^{k});$
		\State $X_{t_{i+1}}^{k,\pi}=X_{t_{i}}^{k,\pi}
		+ b(t_{i},X_{t_{i}}^{k,\pi},Y_{t_{i}}^{k,\pi},Z_{t_{i}}^{k,\pi})\Delta t_{i}+ \sigma(t_{i},X_{t_{i}}^{k,\pi},Y_{t_{i}}^{k,\pi},Z_{t_{i}}^{k,\pi}) \Delta W_{t_{i}};$
		\State $Y_{t_{i+1}}^{k,\pi} = Y_{t_{i}}^{k,\pi} - f(t_{i},X_{t_{i}}^{k,\pi},Y_{t_{i}}^{k,\pi},Z_{t_{i}}^{k,\pi})\Delta t_{i} + Z_{t_{i}}^{k,\pi} \Delta W_{t_{i}};$
		\EndFor
		\State $ (\theta^{k+1},Y_0^{k+1,\pi})=(\theta^{k},Y_0^{k,\pi})-\eta\nabla(\dfrac{1}{2M}\sum\limits_{i=0}^{M}|Y_T^{k,\pi}-g(X_T^{k,\pi})|^2); $
		\EndFor
	\end{algorithmic}
\end{algorithm}
According to Proposition \ref{pro:1}, the triple $(X_{t_{i+1}}^{k,\pi}, Y_{t_{i+1}}^{k,\pi}, Z_{t_{i}}^{k,\pi})$ converges to the true solution $(X_{t}, Y_{t}, Z_{t})$ when the loss goes to zero. The corresponding algorithm is given above.

\subsection{Algorithm 2: Feedback controls based on \texorpdfstring{$ X $}{}}

As discussed in subsection \ref{subsec:3.2}, we consider $ u_t $ and $ Z_t $ as controls. The corresponding discrete-time scheme of control problem \eqref{eq:objectiveFBSDE_3} is
\begin{equation}\label{eq:Eular_scheme_part3}
\begin{cases}
X_{t_{i+1}}^{\pi} = X_{t_{i}}^{\pi}  + b(t_{i},X_{t_{i}}^{\pi} ,u_{t_{i}}^{\pi} ,Z_{t_{i}}^{\pi} )\Delta t_{i} +\sigma(t_{i},X_{t_{i}}^{\pi} ,u_{t_{i}}^{\pi} ,Z_{t_{i}}^{\pi} ) \Delta W_{t_{i}}, \\
Y_{t_{i+1}}^{\pi}  = Y_{t_{i}}^{\pi}  - f(t_{i},X_{t_{i}}^{\pi} ,Y_{t_{i}}^{\pi} ,Z_{t_{i}}^{\pi} ) \Delta t_{i}  + Z_{t_{i}}^{\pi}  \Delta W_{t_{i}},\\
X_0^{\pi} = X_0,Y_0^{\pi} = u_0,\\
u_{t_{i}}^{\pi} = \phi_{i}^{1}(X_{t_{i}}^{\pi}, u_{t_{i}}^{\pi}, Z_{t_{i}}^{\pi}),\\
Z_{t_{i}}^{\pi} = \phi_{i}^{2}(X_{t_{i}}^{\pi}, u_{t_{i}}^{\pi}, Z_{t_{i}}^{\pi}),
\end{cases}
\end{equation}
solving functions $ \phi_{i}^{1} $ and $ \phi_{i}^{2} $, we can get
\begin{equation}\label{eq:Z_path_explicit_alg3}
\begin{cases}
u_{t_{i}}^{\pi} = \phi_{i}^{'1}(X_{t_{i}}^{\pi})=\phi^{1}(X_{t_{i}}^{\pi};\theta_{i}^1),\\
Z_{t_{i}}^{\pi} = \phi_{i}^{'2}(X_{t_{i}}^{\pi})=\phi^{2}(X_{t_{i}}^{\pi};\theta_{i}^2).
\end{cases}
\end{equation}

Thus we need to construct two networks at the same time, one for simulating $ u $ and another for simulating $ Z $. The network simulating $ u $ at each time point consists of four layers including one $ n $-dim input layer, two hidden $ n+10 $-dim layers and a $ m $-dim output layer. The network layers simulating $ Z $ is the same as that of $ u $ except that the output layer is $ m\times d $-dim. All parameters of the two networks are represented as $ \theta $.

The lost function is denoted as
\begin{align*}
loss = \dfrac{1}{2M}\sum\limits_{i=0}^{M}\left[|Y_T^{\pi}-g(X_T^{\pi})|^2 + \dfrac{T}{N}\sum\limits_{j=0}^{N-1}|Y_{t_j}^{\pi}-u_{t_j}^{\pi}|^2\right].
\end{align*}
According to Proposition \ref{pro:2}, the triple $(X_{t_{i+1}}^{k,\pi}, u_{t_{i+1}}^{k,\pi}, Z_{t_{i}}^{k,\pi})$ converges to the solution $(X_{t}, Y_{t}, Z_{t})$ of \eqref{eq:FBSDE} when the loss tends to zero. The detailed algorithm is shown as follows.

\begin{algorithm}[H]
	\caption{Feedback controls based on $ X $}
	\label{alg:3}
	\begin{algorithmic}[1]
		\Require The Brownian motion $ \Delta W_{t_i} $, initial parameters $ \theta^0$, learning rate $ \eta $;
		\Ensure $ X_{T}^{k,\pi} $ and precess $ (Y_{t_i}^{k,\pi})_{0\leq i \leq N} $.
		\For { $ k = 1 $ to $ maxstep $}
		\State $ L=0; $
		\State $ X_{0}^{k,\pi} = X_{0}; $
		\State $ Y_{0}^{k,\pi} = \phi^1(X_{0};\theta_{0}^{k-1});$
		\For { $ i = 0 $ to $ N-1 $}
		\State $ u_{t_{i}}^{k,\pi} = \phi^1(X_{t_{i}}^{k,\pi}; \theta_{i}^{1,k-1});$
		\State $ Z_{t_{i}}^{k,\pi} = \phi^2(X_{t_{i}}^{k,\pi}; \theta_{i}^{2,k-1});$
		\State $X_{t_{i+1}}^{k,\pi} = X_{t_{i}}^{k,\pi} + b(t_{i},X_{t_{i}}^{k,\pi},u_{t_{i}}^{k,\pi},Z_{t_{i}}^{k,\pi})\Delta t_{i} + \sigma(t_{i},X_{t_{i}}^{k,\pi},u_{t_{i}}^{k,\pi},Z_{t_{i}}^{k,\pi}) \Delta W_{t_{i}};$
		\State $Y_{t_{i+1}}^{k,\pi} = Y_{t_{i}}^{k,\pi} - f(t_{i},X_{t_{i}}^{k,\pi},Y_{t_{i}}^{k,\pi},Z_{t_{i}}^{k,\pi})\Delta t_{i} + Z_{t_{i}}^{k,\pi} \Delta W_{t_{i}};$
		\State $ L = L + \dfrac{T}{N}|Y_{t_{i+1}}^{k,\pi}-u_{t_{i+1}}^{k,\pi}|^2; $
		\EndFor
		\State $ Loss = \dfrac{1}{2M}\sum\limits_{i=0}^{M}(|Y_T^{k,\pi}-g(X_T^{k,\pi})|^2 + L) ;$
		\State $ \theta^{k}=\theta^{k-1}-\eta\nabla Loss; $
		\EndFor
	\end{algorithmic}
\end{algorithm}

\subsection{Algorithm 3: Feedback control based on \texorpdfstring{$ (X,Y,Z) $}{}}

For control problem \eqref{eq:control1}, the corresponding discrete-time scheme can be written as
\begin{equation}\label{eq:Eular_scheme_part}
\begin{cases}
X_0^{k+1,\pi} = X_0,\\
Y_0^{k+1,\pi} = \tilde{Y}_0^{k+1},\\
X_{t_{i+1}}^{k+1,\pi} = X_{t_{i}}^{k+1,\pi} + b(t_{i},X_{t_{i}}^{k+1,\pi},Y_{t_{i}}^{k,\pi},Z_{t_{i}}^{k,\pi})\Delta t_{i} + \sigma(t_{i},X_{t_{i}}^{k+1,\pi},Y_{t_{i}}^{k,\pi},Z_{t_{i}}^{k,\pi}) \Delta W_{t_{i}}, \\
Y_{t_{i+1}}^{k+1,\pi} = Y_{t_{i}}^{k+1,\pi} - f(t_{i},X_{t_{i}}^{k+1,\pi},Y_{t_{i}}^{k+1,\pi},Z_{t_{i}}^{k+1,\pi}) \Delta t_{i} + Z_{t_{i}}^{k+1,\pi} \Delta W_{t_{i}},\\
Z_{t_{i}}^{k+1,\pi} = \phi(X_{t_{i}}^{k+1,\pi}, Y_{t_{i}}^{k,\pi}, Z_{t_{i}}^{k,\pi}; \theta_i),
\end{cases}
\end{equation}
where $ X_{t_{i}}^{k,\pi},Y_{t_{i}}^{k,\pi} $ and $ Z_{t_i}^{k+1,\pi} $ are time discretization schemes of $ \tilde{X}_{t}^{k},\tilde{Y}_{t}^{k}, \tilde{Z}_{t}^{k+1} $, respectively, and the values of  $ \{Y_{j}^{0,\pi}\} $ and $ \{Z_{j}^{0,\pi}\} $ for $ j = t_{0},t_{1},\cdots,t_{N-1} $ are given. The network at each time point $ t_i $ consists of four layers including one $ (n+m+m\times d) $-dim input layer, two hidden $ (n+m+m\times d+10) $-dim layers and a $ m\times d $-dim output layer. The loss function is defined as
\begin{align*}
loss = \dfrac{1}{2M}\sum\limits_{i=0}^{M}|Y_T^{k+1,\pi}-g(X_T^{k+1,\pi})|^2,
\end{align*}
where $ M $ is the number of samples.

According to Theorem \ref{thm:main}, the triple $(X_{t_{i+1}}^{k+1,\pi}, Y_{t_{i+1}}^{k+1,\pi}, Z_{t_{i+1}}^{k+1,\pi})$ converges to the true solution $(X_{t}, Y_{t}, Z_{t})$ when the loss tends to zero.

The detailed algorithm based on the conclusions of section \ref{sec:contr_persp} is given as following.
\begin{algorithm}[H]
	\caption{Feedback control based on $ (X,Y,Z) $}
	\label{alg:1}
	\begin{algorithmic}[1]
		\Require The Brownian motion $ \Delta W_{t_i} $, initial parameters $ (\theta^0,Y_0^{0,\pi}) $, learning rate $ \eta $ and the couple precess $ (Y_{t_i}^{0,\pi},Z_{t_i}^{0,\pi}) $;
		\Ensure The couple precess $ (Y_{t_i}^{k+1,\pi},Z_{t_i}^{k+1,\pi}) $.
		\For { $ k = 0 $ to $ maxstep $}
		\State $ X_{0}^{k+1,\pi} = X_{0}, Y_{0}^{k+1,\pi} = Y_{0}^{k,\pi}; $
		\For { $ i = 0 $ to $ N-1 $}
		\State $X_{t_{i+1}}^{k+1,\pi} = X_{t_{i}}^{k+1,\pi} +b(t_{i},X_{t_{i}}^{k+1,\pi},Y_{t_{i}}^{k,\pi},Z_{t_{i}}^{k,\pi})\Delta t_{i}+ \sigma(t_{i},X_{t_{i}}^{k+1,\pi},Y_{t_{i}}^{k,\pi},Z_{t_{i}}^{k,\pi}) \Delta W_{t_{i}};$
		\State $Z_{t_{i}}^{k+1,\pi} = \phi(X_{t_{i}}^{k+1,\pi}, Y_{t_{i}}^{k,\pi}, Z_{t_{i}}^{k,\pi}; \theta_{i}^{k});$
		\State $Y_{t_{i+1}}^{k+1,\pi} = Y_{t_{i}}^{k+1,\pi} - f(t_{i},X_{t_{i}}^{k+1,\pi},Y_{t_{i}}^{k+1,\pi},$ $Z_{t_{i}}^{k+1,\pi})\Delta t_{i} + Z_{t_{i}}^{k+1,\pi} \Delta W_{t_{i}};$
		\EndFor
		\State $ (\theta^{k+1},Y_0^{k+1,\pi})=(\theta^{k},Y_0^{k,\pi})-\eta\nabla(\dfrac{1}{2M}\sum\limits_{i=0}^{M}|Y_T^{k+1,\pi}-g(X_T^{k+1,\pi})|^2); $
		\EndFor
	\end{algorithmic}
\end{algorithm}

\begin{rem}
	In this algorithm, the Brownian motion is denoted as $ W $. The initial paths $ Y_{t_i}^{0,\pi} $ and $ Z_{t_i}^{0,\pi} $ are generated randomly, and they do not influence the convergence of $ Y_0^{k,\pi} $ and the process $ Z^{k,\pi} $. As the aim of the algorithm is to find the optimal parameters $ \theta^{*} $ to approximate the map $ \phi $ in equations \eqref{eq:Eular_scheme_part}, the gradient descent method is used in line 8 of the algorithm which makes $ \theta^{k} $ approximate to $ \theta^{*} $ after $k$ times of iterations. The numerical results shown in Section \ref{sec:results} confirm the convergence of the algorithm. Besides, when the function $ \phi $ in \eqref{eq:Eular_scheme_part} only depends on $ X_{t_i} $, the numerical results also demonstrate good convergence for the neural network approximation.
\end{rem}

\begin{rem}
    For any given FBSDE, even the assumptions in Section 3 are not satisfied, we can still use our method to calculate the numerical solution of FBSDE \eqref{eq:FBSDE}. The simulation solution is close enough to the real solution of FBSDE \eqref{eq:FBSDE} when the value of loss function is close enough to zero.
\end{rem}

\section{Numerical results}\label{sec:results}

In this section, we present the numerical results of our algorithms for different cases including partially-coupled cases and fully-coupled cases. If not mentioned, the results in the examples are the results of algorithm 3. All the examples of this section are implemented with 256 sample-paths of the Brownian motion $ W $, learning rate $ \eta = 5\times10^{-3} $, the number of time-points $ N=25 $ if not specifically noted. The numerical experiments are performed in PYTHON on a LENOVO computer with a 2.40 Gigahertz (GHz) Inter Core i7 processor and 8 gigabytes (GB) random-access memory (RAM).

\subsection{Example 1. Partially-coupled case (BSDE)}

We consider an example in \cite{explictresultsimulation} for solving of partially-coupled FBSDEs.

Assume $ t\in[0,T],x=(x_{1},\cdots,x_{d})\in \mathbb{R}^{d},y\in\mathbb{R},z\in \mathbb{R}^{d}$ and the functions $ b,\sigma,f,g $ in \eqref{eq:FBSDE} satisfy
\begin{align*}
b(t,x,y,z) &= 0, \\
\sigma(t,x,y,z) &= 0.25diag(x),\\
f(t,x,y,z) &= 0.25 \times (y - \dfrac{2 + 0.25^2 \times d}{2\times0.25^2\times d})(\sum\limits_{i=1}^{d}z_{i}),\\
g(x) &= \dfrac{exp(T+\sum_{i=1}^{d}x_{i})}{1 + exp(T+\sum_{i=1}^{d}x_{i})},
\end{align*}
where $ diag(x) $ represents a diagonal matrix where the value of the $ i $th diagonal element is $ x_{i} $ , and the explicit solution of this FBSDE is
$$ Y(t,x) = \dfrac{exp(t+\sum_{i=1}^{d}x_{i})}{1+exp(t+\sum_{i=1}^{d}x_{i})}.$$

We set $ T=0.5, X_{0} = 0 $ for different dimensions, and the explicit solution of $ Y_{0} $ is 0.5.

Figure \ref{ex_01_d1} shows that in the case of $ d=1 $, the network solution is close to the explicit solution when the number of iteration steps increases. After 10000 steps, the value of $ Y_{0} $ is 0.50496 and has a relative error of $ 0.99\% $ comparing to the explicit solution.

\begin{figure}[H]
	\centering
	\includegraphics[scale=0.45]{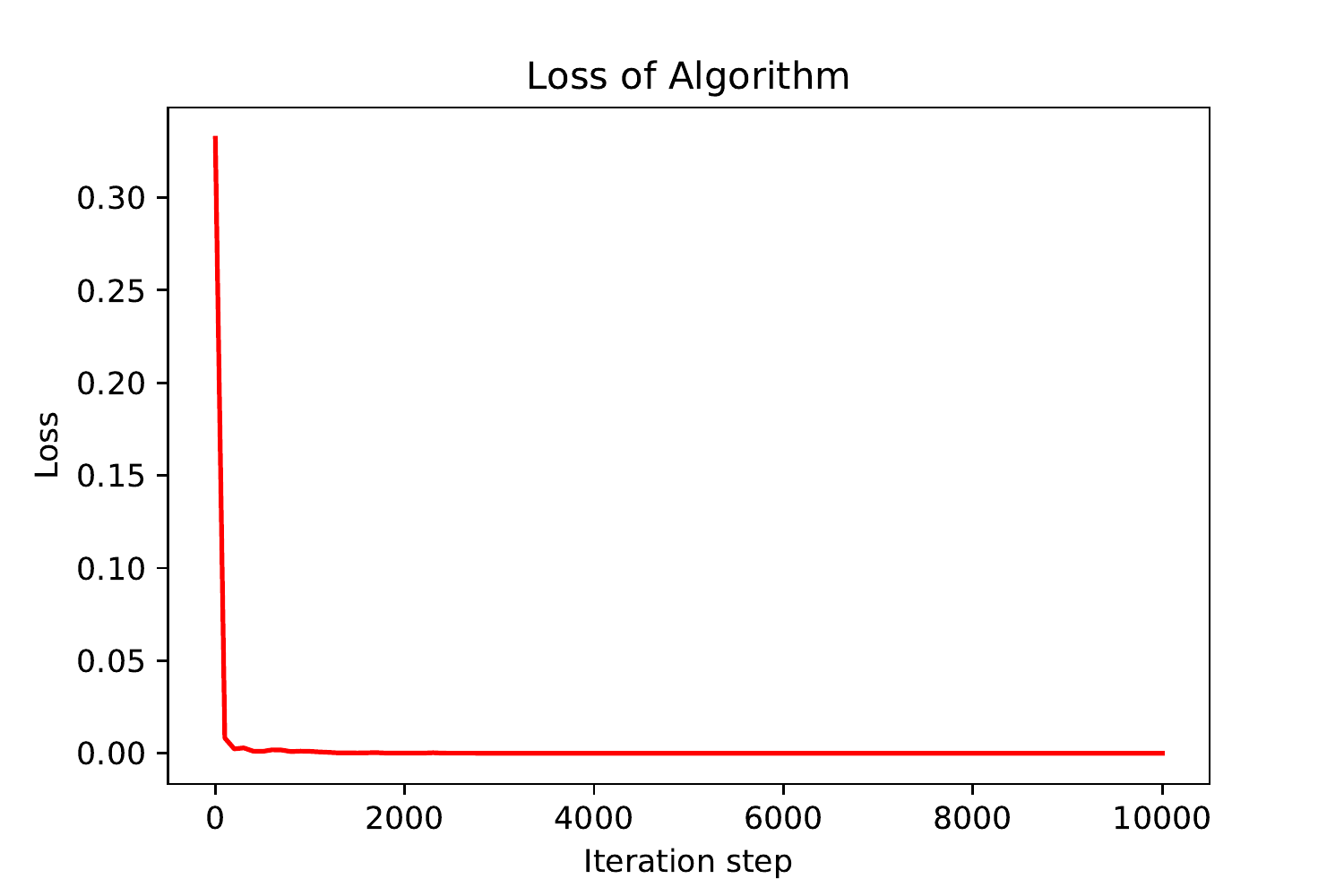}
	\includegraphics[scale=0.45]{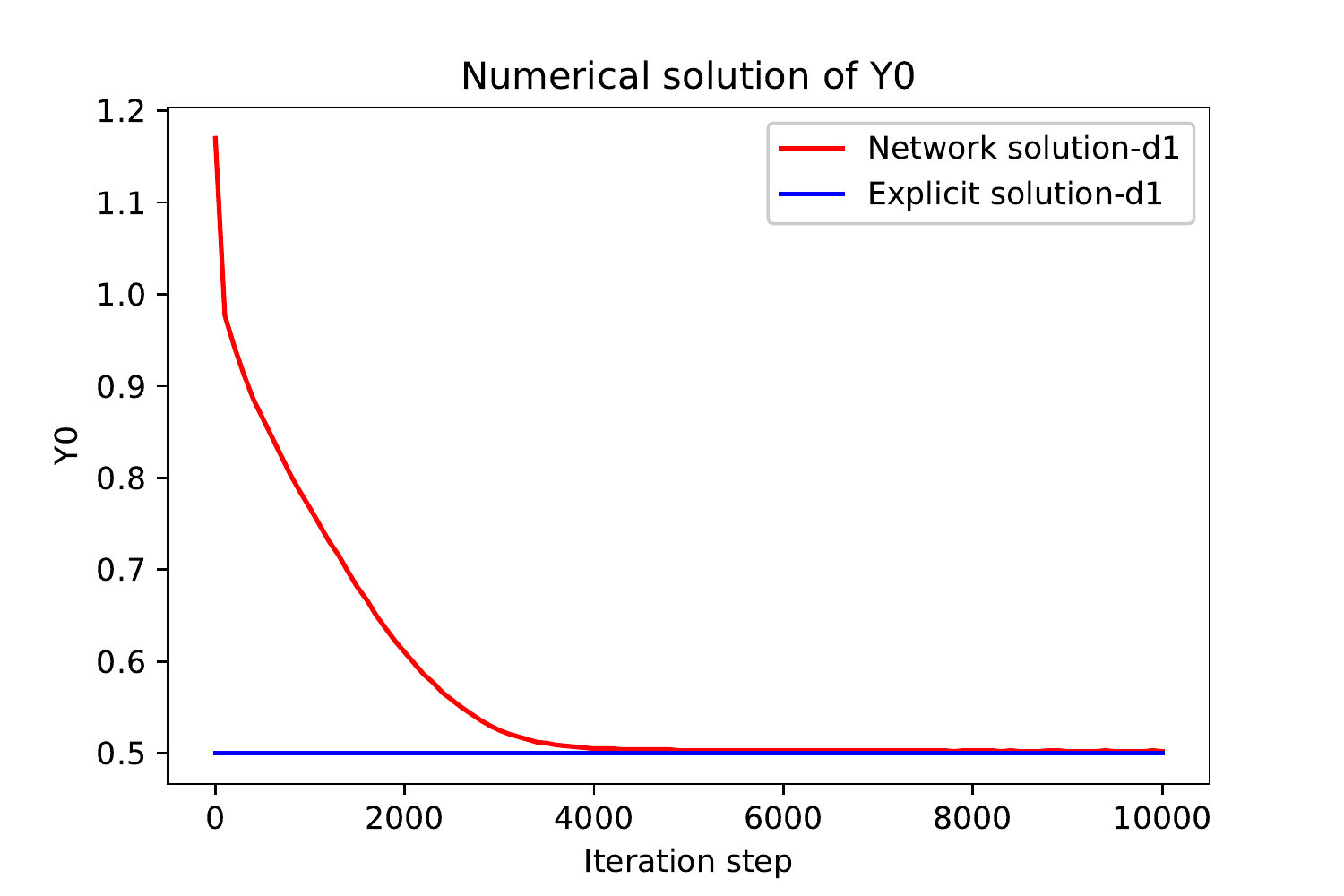}
	\caption{Case d=1. The upper figure represents the curve of loss when the iteration step increases, and the loss achieves $ 4.52\times10^{-7} $ after 10000 iteration steps. The lower figure shows the comparison between the network solution and the explicit solution which are represented with the red curve and the blue curve, respectively. }
	\label{ex_01_d1}
\end{figure}

For $ d=100 $ case, the value of $ Y_{0} $ is 0.50413 after 15000 steps. The network has a rather optimistic performance with a relative error of $ 0.83\% $ to the explicit solution, see figure \ref{ex_01_d100} in detail.

\begin{figure}[H]
	\centering
	\includegraphics[scale=0.45]{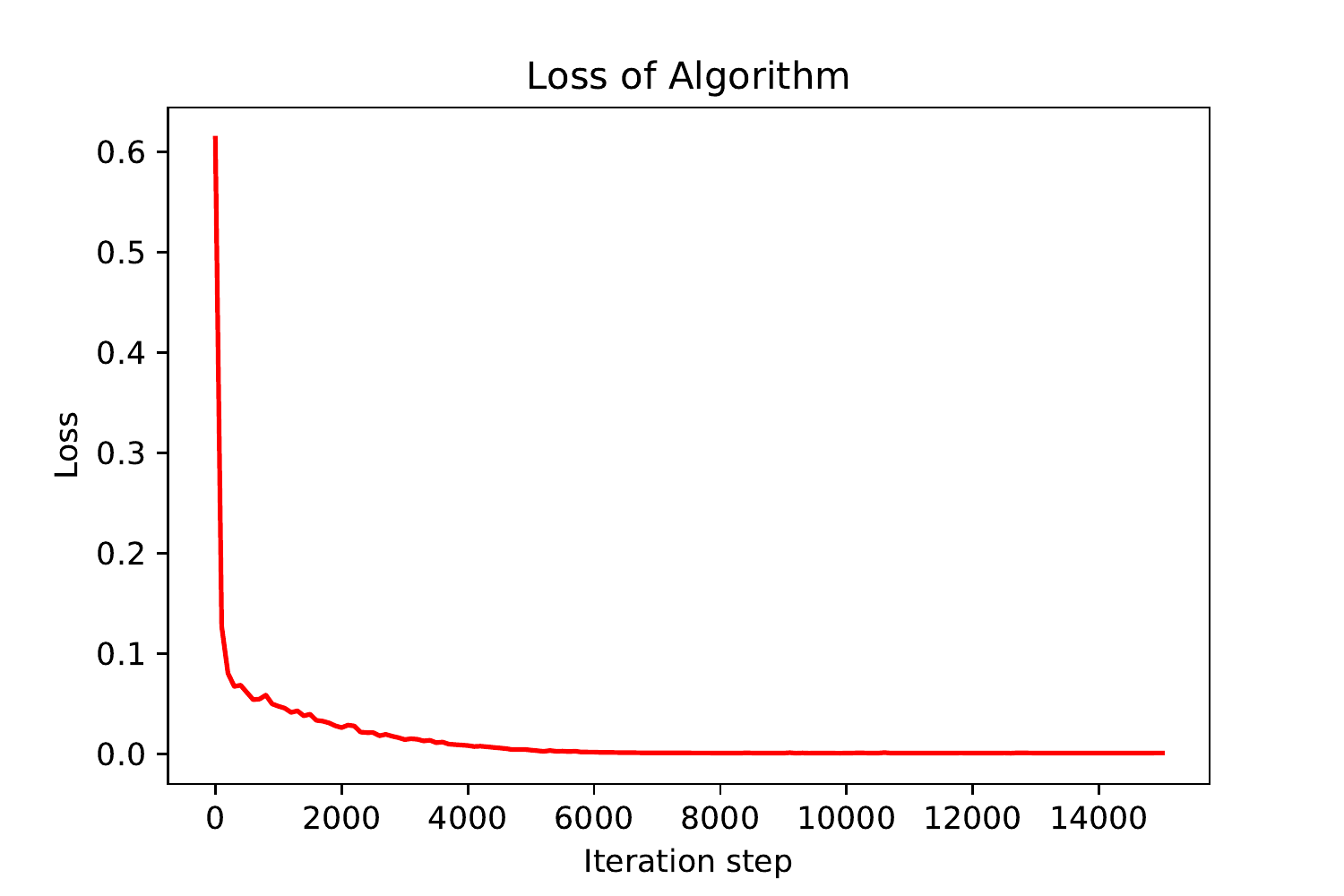}
	\includegraphics[scale=0.45]{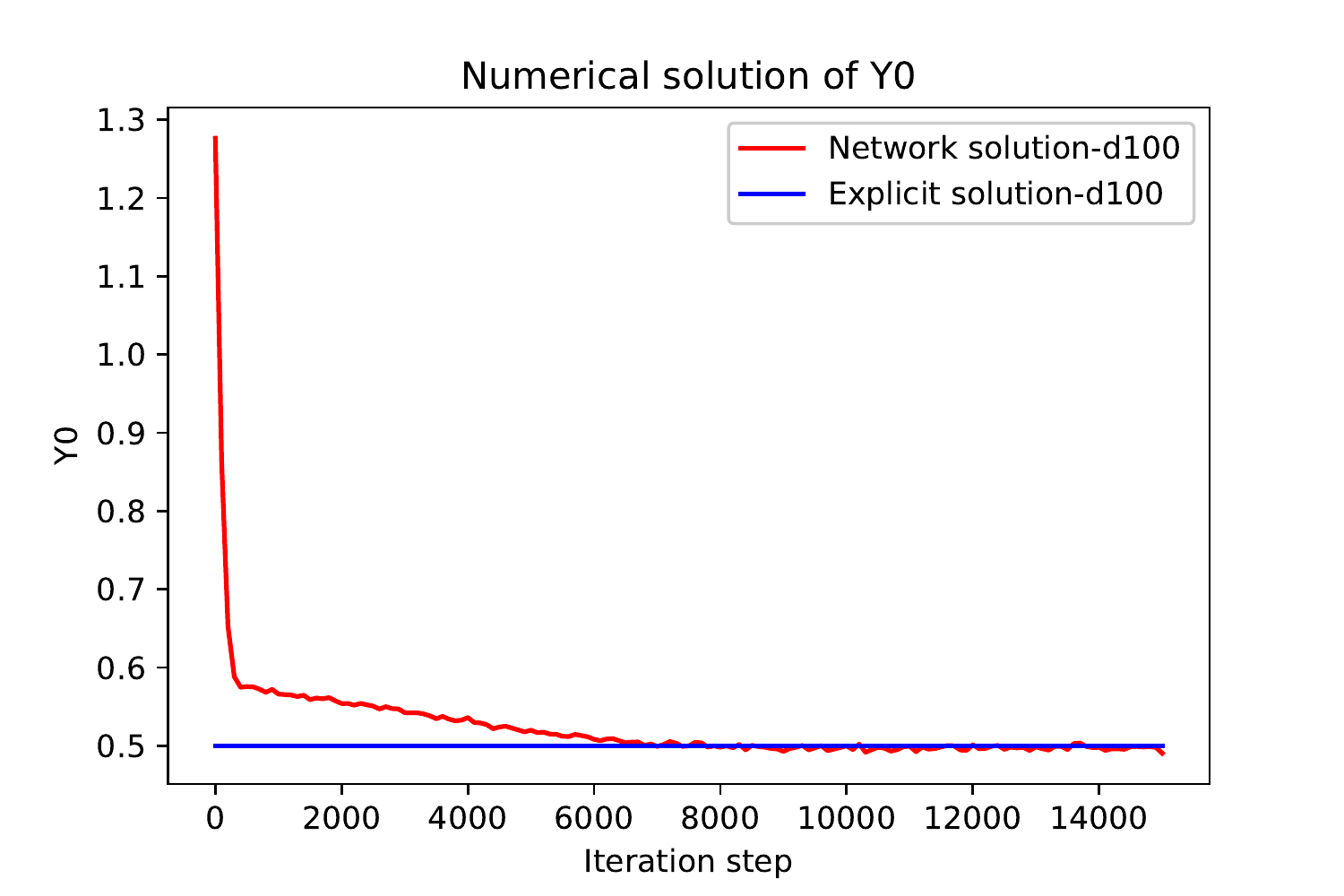}
	\caption{Case d=100. The curve of loss in the upper figure shows that after 15000 steps, the loss value is $ 3.30\times10^{-3} $. As mentioned above, the red and blue curve represent respectively the network solution and the explicit solution.}
	\label{ex_01_d100}
\end{figure}

The initial value of $ Y_0 $ is selected randomly in the interval $ [1.0, 2.0] $ and the numerical algorithm is performed 10 times independently. Table \ref{tab:Decoupled100} shows the detailed numerical results.

\begin{table}[H]
	\centering
	\caption{Numerical results comparing with the explicit solutions}
	\label{tab:Decoupled100}
	\begin{tabular}{|c|c|c|c|c|}
		\hline
		Step&Mean of $ Y_0 $&Variance of $ Y_0 $&Relative error of $ Y_0 $&Mean of runtime(s) \\
		\hline
		3000	&0.54501&3.540E-02&0.0900&4715.6\\
		\hline
		6000	&0.50970&6.803E-04&0.0194&8751.3\\
		\hline
		9000	&0.49882&5.049E-06&0.0024&12693.2\\
		\hline
		12000	&0.49658&3.752E-07&0.0068&16615.4\\
		\hline
		15000	&0.49602&4.369E-07&0.0080&20528.8\\
		\hline
	\end{tabular}
\end{table}

\subsection{Example 2. The forward SDE not containing \texorpdfstring{$ Z $}{} term}

We adopt the example of \cite{Yu2016Efficient} which does not contain $ Z $ term in the forward SDE. Consider the following FBSDE,
\begin{equation*}
\left\{
\begin{array}{l}
X_{t} = x +  \int_{0}^{t} b(s,X_{s},Y_{s}) ds + \int_{0}^{t} \sigma(s,X_{s},Y_{s}) dW_{s},\\
Y_{t} = g(X_{T}) +  \int_{t}^{T} f(s,X_{s},Y_{s},Z_{s}) ds - \int_{t}^{T} \left\langle Z_{s}, dW_{s} \right\rangle _{\mathbb{R}^{d}},
\end{array}
\right.
\end{equation*}
where
\begin{align*}
b_{i}(t,x,y) &= \dfrac{t}{2} \cos^{2}(y+x_{i}),\\
\sigma_{i,i}(t,x,y) &= \dfrac{t}{2} \sin^{2}(y+x_{i}),\\
g(x) &= \dfrac{1}{d}(\sum\limits_{i=1}^{d-1}x_{i}^{2}(x_{i+1}+T) + x_{d}^{2}(x_{1}+T)),
\end{align*}
and
\begin{align*}
&f(t,x,y,z) =\\
& \sum_{i=1}^{d}z_{i} - \dfrac{1}{d}(1+\dfrac{t}{2})\sum_{i=1}^{d}x_{i}^{2} - \dfrac{t}{d} (\sum_{i=1}^{d-1}x_{i}(x_{i+1}+t)+x_{q}(x_{1}+t))\\
&-\dfrac{t^{2}}{2d^{2}}(\sum_{i=1}^{d-1}(x_{i+1}+t)\sin^{4}(y+x_{i}) + (x_{1}+t)\sin^{4}(y+x_{d})).
\end{align*}
The explicit solution of this FBSDE is
$$ Y_{t} = \dfrac{1}{d}(\sum\limits_{i=1}^{d-1}X_{t,i}^{2}(X_{t,i+1}+t) + X_{t,d}^{2}(X_{t,1}+t)). $$

\cite{Yu2016Efficient} has shown the results of different dimensions for $ d=2,3,4,5 $. In the case  $ d=5, x=1.0 $, \cite{Yu2016Efficient} has achieved a relative error of $ 5.489\times10^{-4} $ in 134 seconds, and we get an approximated result of 1.0002 for $ Y_{0} $ with a relative error of $ 0.02\% $ comparing with the explicit solution of 1.0. Figure \ref{ex_02_d6} shows the details.

\begin{figure}[H]
	\centering
	\includegraphics[scale=0.45]{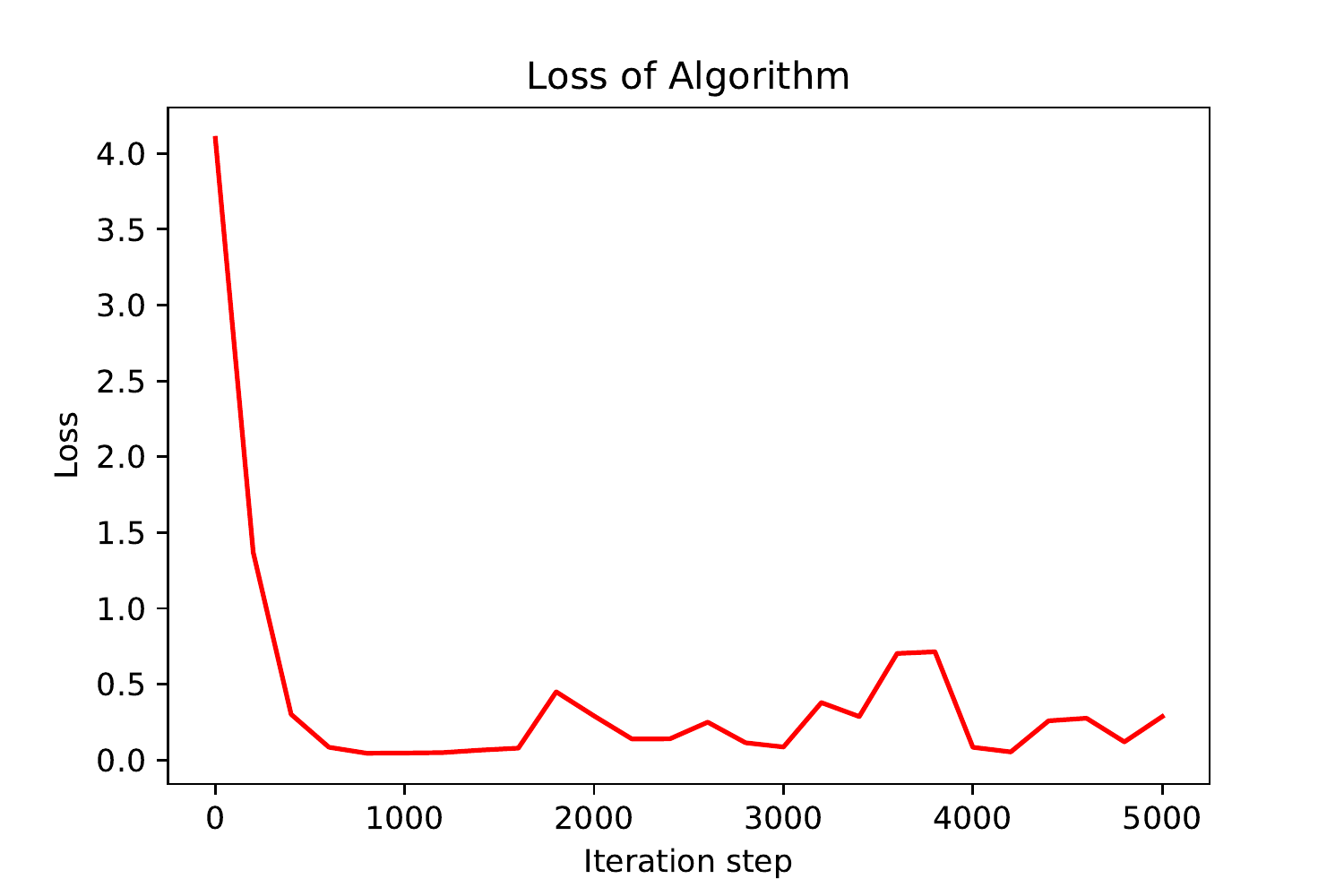}
	\includegraphics[scale=0.45]{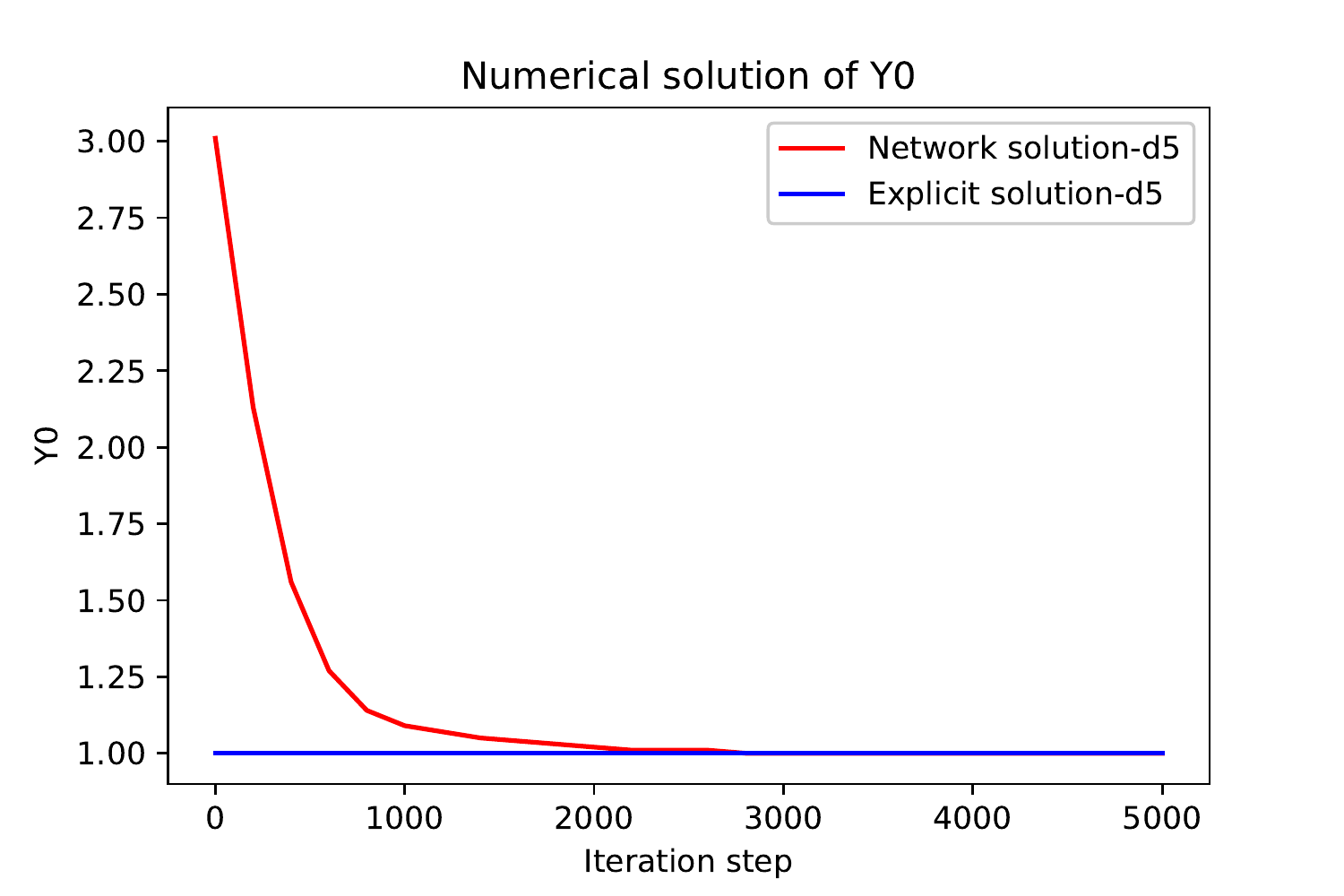}
	\caption{Case d=5. The curve of loss for 5000 iteration steps is shown in the upper and the comparison with the explicit solution is shown in the right figure. As mentioned above, the red and blue curve represent the results of the network solution and the explicit solution respectively.}
	\label{ex_02_d6}
\end{figure}

Similarly, we perform 10 independent runs for the case $ d=5 $. Detailed results are shown in Table \ref{tab:Patrial}.

\begin{table}[H]
	\centering
	\caption{Numerical results comparing with the explicit solutions}
	\label{tab:Patrial}
	\begin{tabular}{|c|c|c|c|c|}
		\hline
		Step&Mean of $ Y_0 $&Variance of $ Y_0 $&Relative error of $ Y_0 $&Mean of runtime(s) \\
		\hline
		1000	&1.09393&6.445E-02&9.393E-02&279.9\\
		\hline
		2000	&1.02127&1.046E-04&2.127E-02&461.4\\
		\hline
		3000	&1.00247&7.194E-06&2.470E-03&658.2\\
		\hline
		4000	&1.00025&8.857E-06&2.538E-04&880.9\\
		\hline
		5000	&1.00020&8.000E-06&1.996E-04&1077.4\\
		\hline
	\end{tabular}
\end{table}

For high dimensional cases, the method of \cite{Yu2016Efficient} is not applicable while our neural network method shows satisfactory results. For the case $ d=100 $, our network demonstrates remarkable performance and the relative error of $ Y_{0} $ is $ 0.1\% $. See Figure \ref{ex_02_d100} in detail.

\begin{figure}[H]
	\centering
	\includegraphics[scale=0.45]{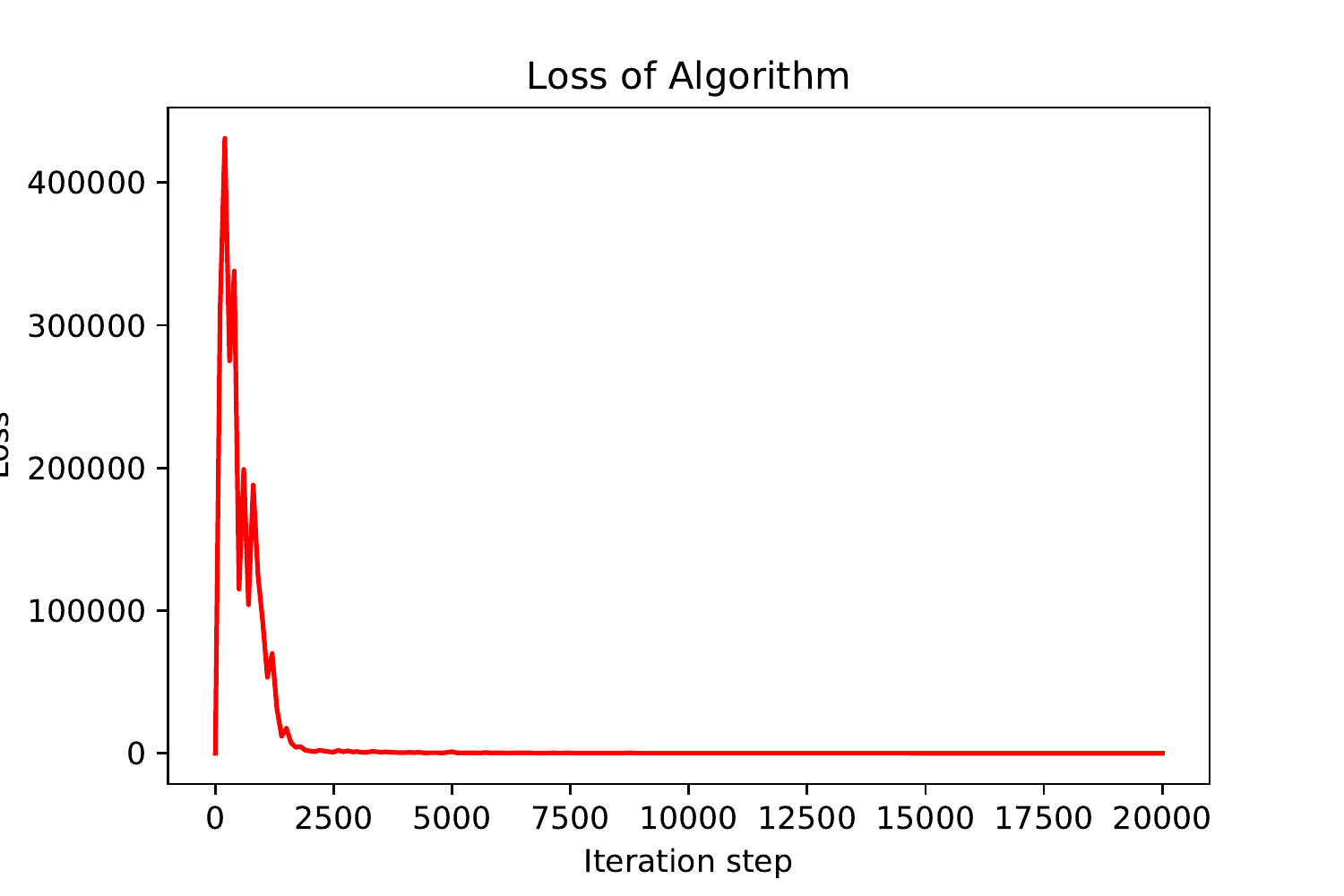}
	\includegraphics[scale=0.45]{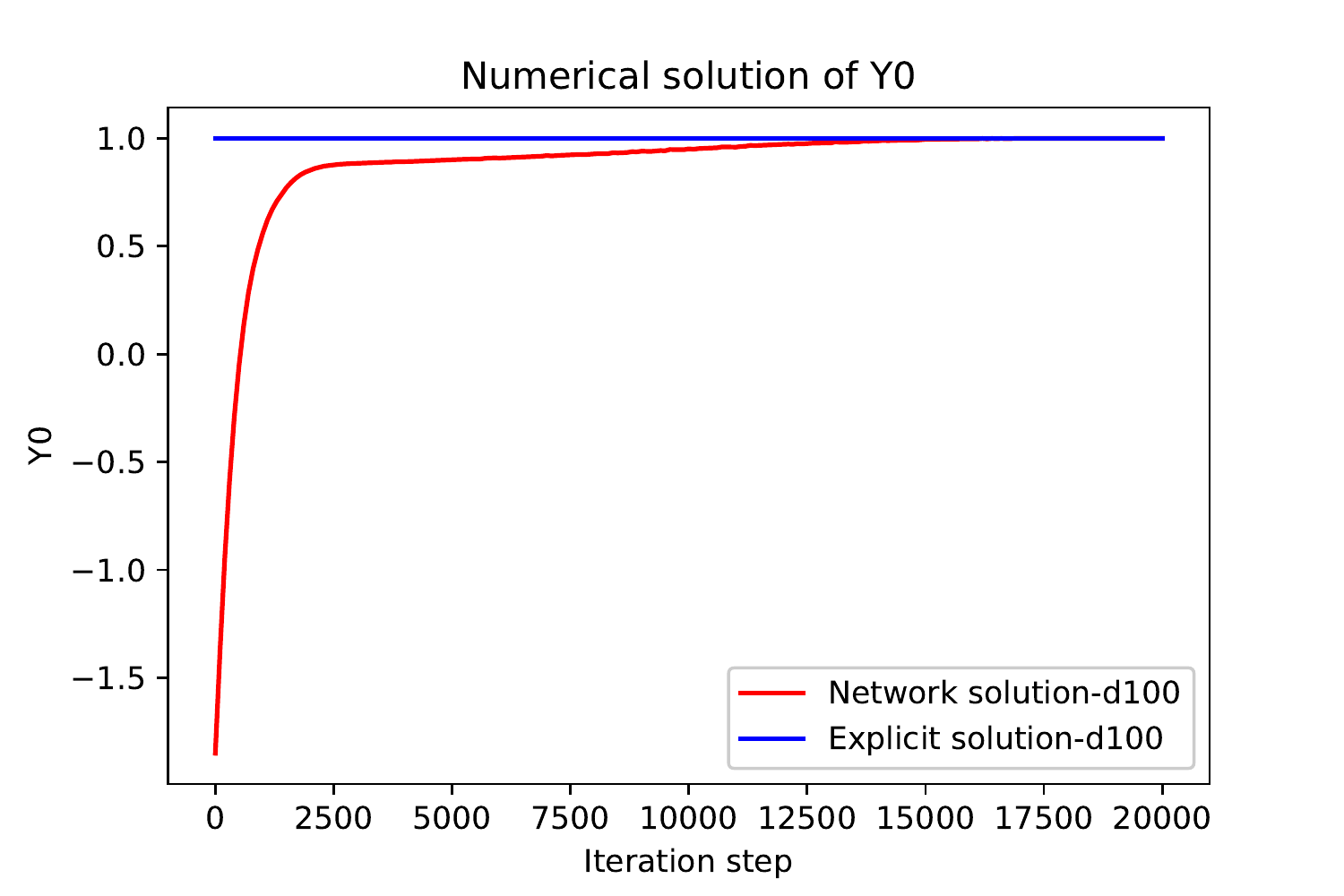}
	\caption{Case d=100. the upper figure shows the curve of loss when the number of iteration steps increases. The lower figure shows the comparison between the network solution and the explicit solution, the relative error has a downward trend when the number of iteration steps increases and tends to be stable at about $ 0.1\% $. }
	\label{ex_02_d100}
\end{figure}

\subsection{Example 3. 1-dim fully-coupled case}

We adopt the example of ~\cite{Zhao2016Multistep} for the fully-coupled case. Considering the following FBSDE,
\begin{empheq}[left=\empheqlbrace]{align*}
X_{t} =& X_{0} -  \begin{matrix}
\int_{0}^{t}\dfrac{1}{2} \sin(s+X_{s}) \cos(s+X_{s})(Y_{s}^{2}+Z_{s})ds
\end{matrix} \\
&+ \begin{matrix}
\int_{0}^{t}\dfrac{1}{2}\cos(s+X_{s})(Y_{s}\sin(s+X_{s})+Z_{s} +1)dW_{s}
\end{matrix},\\
Y_{t} =& \sin(T+X_{T}) +  \begin{matrix}
\int_{t}^{T}Y_{s}Z_{s}-\cos(s+X_{s})ds
\end{matrix} - \begin{matrix}
\int_{t}^{T}Z_{s}dW_{s},
\end{matrix}
\end{empheq}
the solutions of this FBSDE are $ Y(t,X_{t}) = \sin(t+X_{t}) $ and $ Z(t,X_{t}) = \cos^{2}(t+X_{t}) $. The numerical solution for this kind of FBSDEs is much more difficult as its diffusion coefficient is dependent on $ Z. $

We set $ X_{0} = 1.0, T=0.1 $, and the explicit solution $ Y_{0} \approx 0.84147 $. The numerical results are shown in Figure \ref{ex_03_d5} which are close to the results of \cite{Zhao2016Multistep}, and the relative error is $ 7.49\times 10^{-4} $.

\begin{figure}[H]
	\centering
	\includegraphics[scale=0.45]{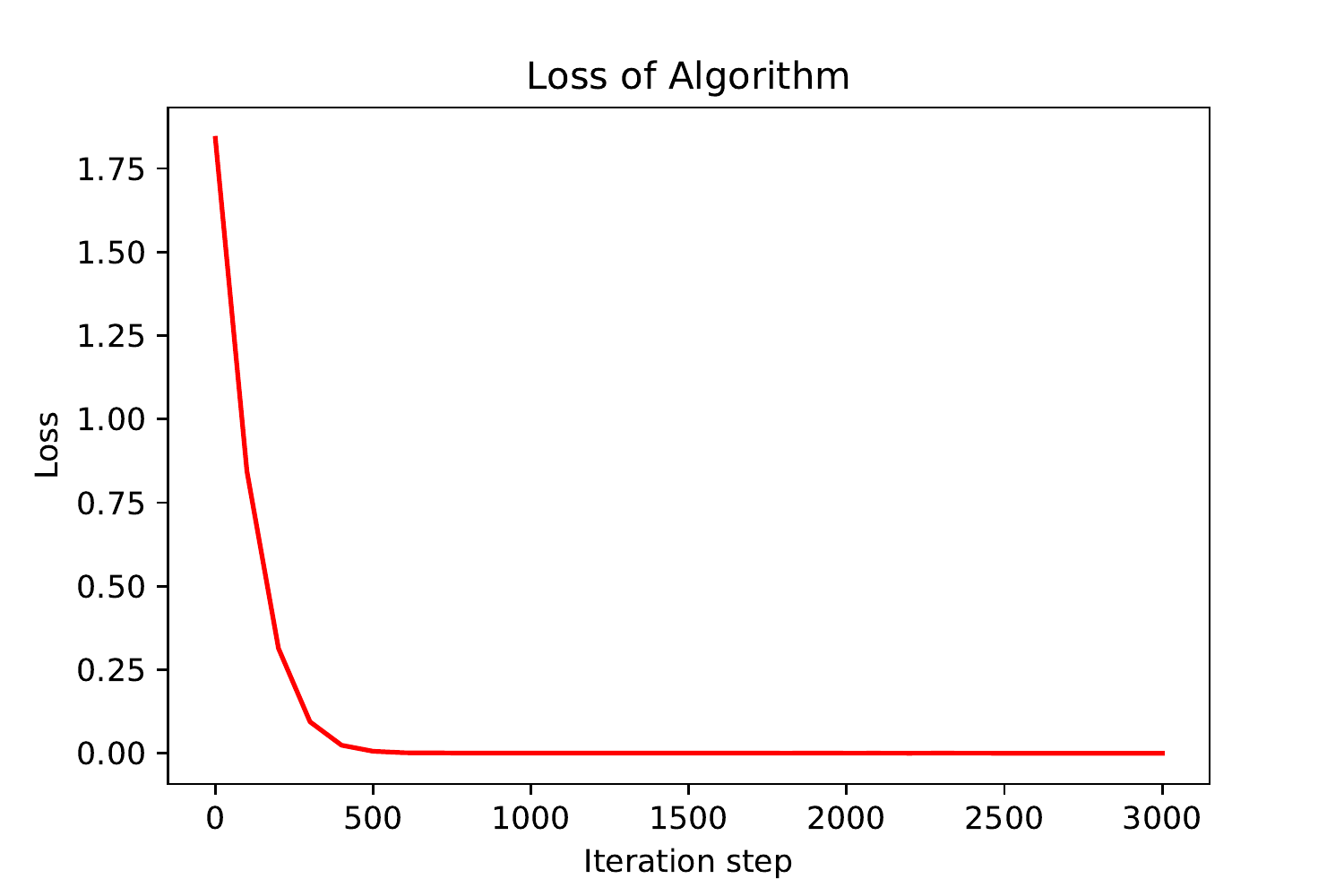}
	\includegraphics[scale=0.45]{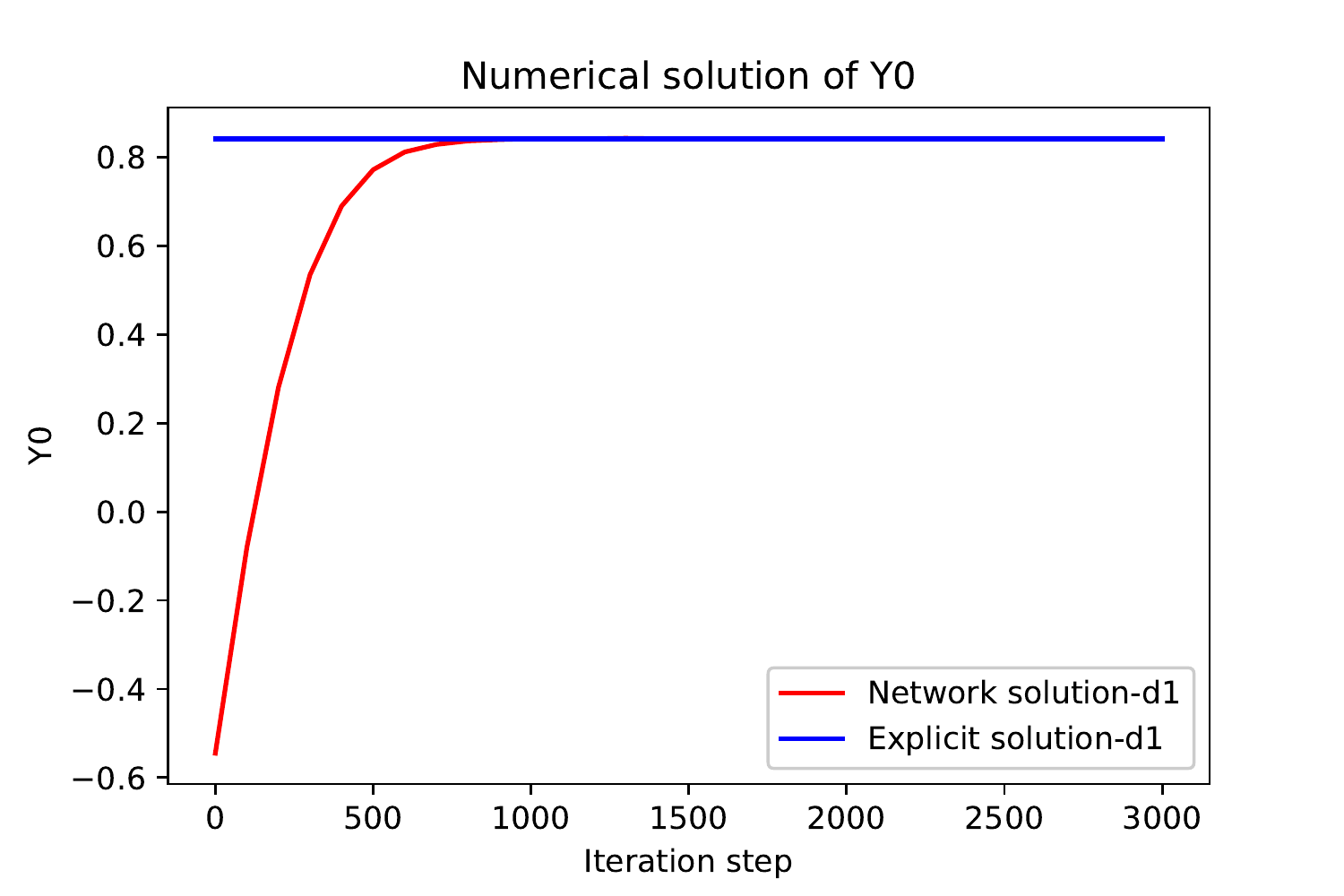}
	\caption{Case d=1. After 3000 iteration steps, the loss is reduced to $ 2.49\times 10^{-4} $, and the relative error is reduced to $ 7.49\times 10^{-4}$. }
	\label{ex_03_d5}
\end{figure}

In this example, we also use Algorithm 1 and 2 to solve the FBSDE. We use the same time partition, learning rate, number of iteration steps and samples as in Algorithm 3. In order to compare the results of the three algorithms more throughly, we calculate the variance of 1000 iteration steps before the current iteration. When the variance is less than $ 1\times 10^{-7} $ or the number of iteration steps achieves the upper limit 10000, the loop iteration is terminated. We record the number of iteration steps and the running time when the algorithm terminate. As expected, all the results are convergent. The comparison of the three algorithms are shown in Table 3. We run each algorithm 10 times independently.

\begin{table}[H]
	\centering
	\caption{ Numerical results comparison of the 3 algorithms }
	\label{tab:compared1}
	\begin{tabular}{|c|c|c|c|c|c|}
		\hline
		Method&Mean of $ Y_0 $&Variance of $ Y_0 $&Relative error of $ Y_0 $&Steps&Time(s) \\
		\hline
		Alg 1	&0.8381&1.262E-05&4.32E-03&2588.7&390.9\\
		\hline
		Alg 2	&0.8334&5.367E-06&9.86E-03&10000&2160.0\\
		\hline
		Alg 3	&0.8421&5.742E-08&4.24E-04&2030.4&1076.4\\
		\hline
	\end{tabular}
\end{table}

From Table 3, we can see that Algorithm 3 is more accurate and stable for this example. It requires the least number of iteration steps to achieve a smooth convergence result but it takes the longest time for each iteration step. Algorithm 1 and 2 takes less running time for each iteration step, but Algorithm 2 can not get a stable convergence result up to the maximum iteration step.

\subsection{Example 4. 100-dim nonlinear generator for the FBSDE}

Assume $ t\in[0,T],x=(x_{1},\cdots,x_{d})\in \mathbb{R}^{d},y\in\mathbb{R},z\in \mathbb{R}^{d}$ , consider this following FBSDE,
\begin{equation}\label{eq:exmp:1}
\begin{cases}
\mathrm{d}X_{i,t}= d\exp(-\frac{1}{d}\sum_{i=0}^{d} X_{i,t}) Z_{i,t} \mathrm{d}W_{i,t},\\
-\mathrm{d}Y_{t} = -\exp(-\frac{1}{d}\sum_{i=0}^{d} X_{i,t})(\sum_{i=1}^{d}Z_{i,t}^2)\mathrm{d}t - Z_{t}\transpose \mathrm{d}W_{t},\\
X_0 = x, Y_T = \exp(\frac{1}{d}\sum_{i=0}^{d} X_{i,T}),
\end{cases}
\end{equation}
where $ \mathrm{d}W $ takes value in $ \mathbb{R}^{d} $. We can check that the explicit solution of this FBSDE is
$$ Y(t,x) = \exp(\dfrac{1}{d}\sum_{i=0}^{d} X_{i,t}).$$

We set $ T=0.1, X_{0} = 1.0 $ for $ d=100 $, the explicit solution of $ Y_{0} $ is $ e \approx 2.7183 $, and the number of time  points is $ N=25 $. The loss curve and the numerical solution of $ Y_{0} $ are shown in Figure \ref{fig:ex_01_d100}.We can see from Figure \ref{fig:ex_01_d100} that in the case of $ d=100 $, the result of the network solution is closer to the explicit solution when the number of iteration steps increases. After 4000 steps, the value of $ Y_{0} $ is 2.71662 and has a relative error of $ 0.061\% $.

\begin{figure}[H]
	\centering
	\includegraphics[scale=0.45]{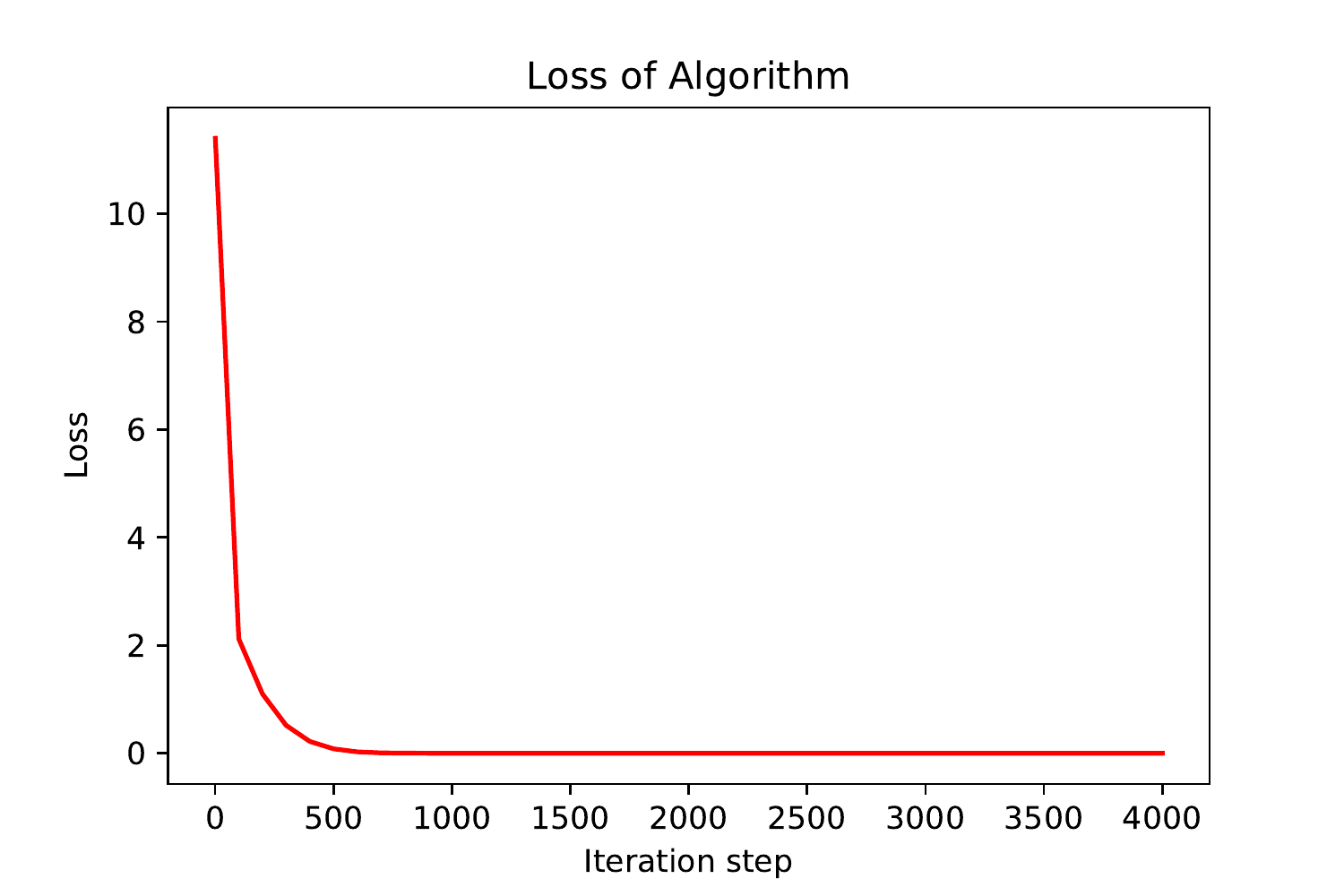}
	\includegraphics[scale=0.45]{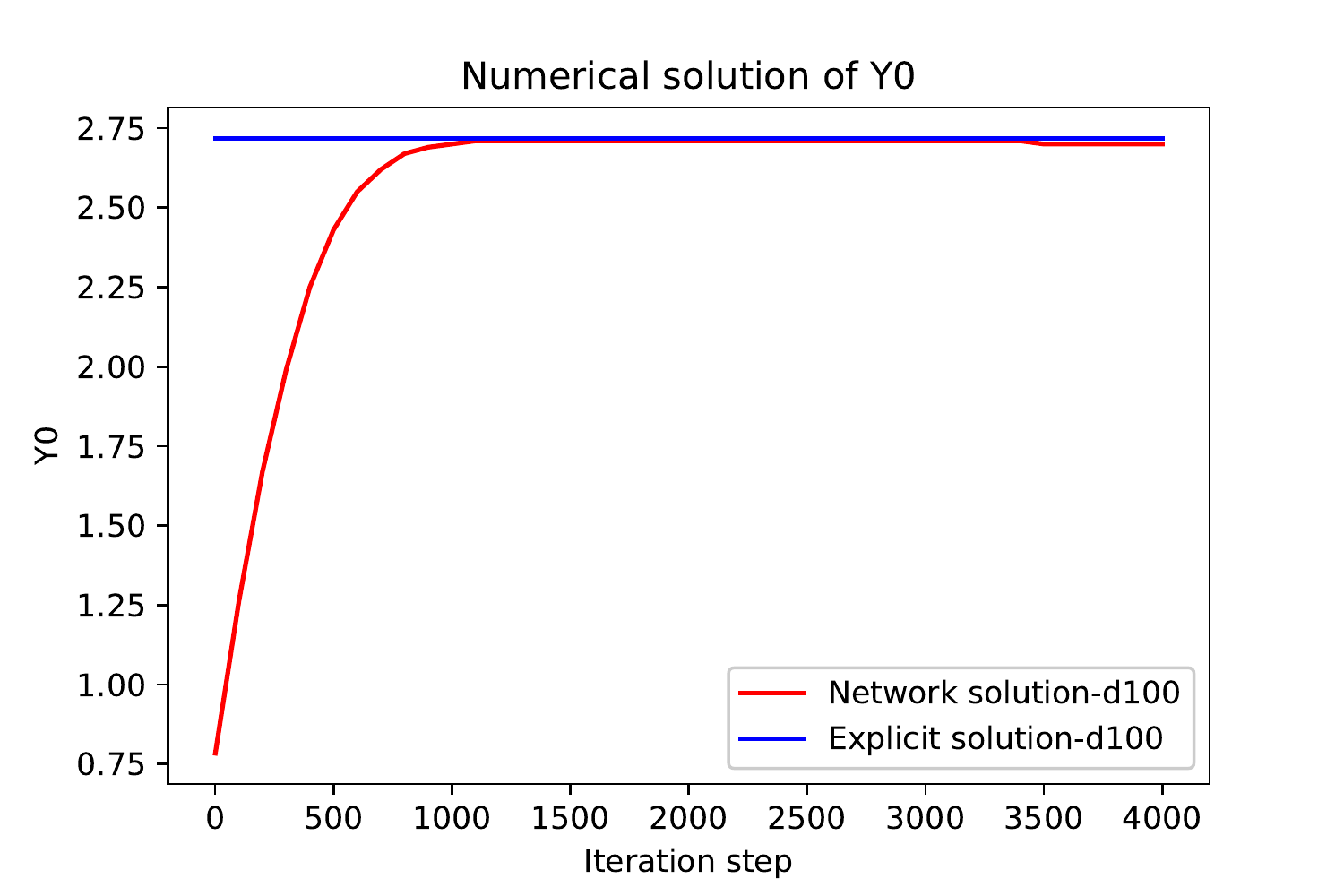}
	\caption{Case d=100. The curve of loss in the upper figure shows that after 4000 steps, the loss value is $ 1.53\times10^{-5} $. The red and blue curves in the lower figure represent respectively the network solution and the explicit solution.}
	\label{fig:ex_01_d100}
\end{figure}

We show the results with different initial values for $ d=100 $ in Table \ref{tab:ExplicitEx}. The neural network demonstrates satisfactory results. For the case of $x=0.5$ and $ x=1.0 $, we compare the three algorithms with the same stopping condition as mentioned in Example 3. The comparison results of the three algorithms are shown in Table \ref{tab:exp=0.5} for $x=0.5$ and Table \ref{tab:exp=1.0} for $ x=1.0 $ respectively.

\begin{table}[H]
	\centering
	\caption{ Numerical results for $ d=100 $ with different initial values}
	\label{tab:ExplicitEx}
	\begin{tabular}{|c|c|c|c|c|c|}
		\hline
		&$ x = 0.0 $&$ x = 0.1 $&$ x = 0.2 $&$ x = 0.5 $&$ x = 1.0 $\\
		\hline
		Explicit solution &1.00000&1.10517&1.22140&1.64872&2.71828\\
		\hline
		Network solution  &0.99777&1.10390&1.22103&1.64780&2.71662\\
		\hline
		Absolute error	  &2.23E-3&1.27E-3&3.71E-4&9.21E-4&1.66E-3\\
		\hline
		Relative error	  &2.23E-3&1.15E-3&3.03E-4&5.58E-4&6.11E-4\\
		\hline
\end{tabular}
\end{table}

\begin{table}[H]
	\centering
	\caption{ Numerical results comparison of the 3 algorithms in case $x=0.5$}
	\label{tab:exp=0.5}
	\begin{tabular}{|c|c|c|c|c|c|}
		\hline
        Method&Mean of $ Y_0 $&Variance of $ Y_0 $&Relative error of $ Y_0 $&Steps&Time(s) \\
		\hline
		Alg 1	&1.6481&4.851E-07&3.93E-04&1518.2&1000.8\\
		\hline
		Alg 2	&1.6487&1.262E-09&1.94E-05&2049.0&423.6\\
		\hline
		Alg 3	&1.6478&9.335E-08&5.57E-04&1087.8&1957.8\\
		\hline
	\end{tabular}
\end{table}

\begin{table}[H]
	\centering
	\caption{Numerical results comparison of the 3 algorithms in case $x=1.0$}
	\label{tab:exp=1.0}
	\begin{tabular}{|c|c|c|c|c|c|}
		\hline
		Method&Mean of $ Y_0 $&Variance of $ Y_0 $&Relative error of $ Y_0 $&Steps&Time(s) \\
		\hline
		Alg 1	&2.7154&3.562E-07&1.07E-03&3516.8&2205.3\\
		\hline
        Alg 2	&2.7182&4.456E-10&1.21E-05&2557.7&503.4\\
        \hline
		Alg 3	&2.7164&1.282E-07&6.76E-04&1589.2&2768.4\\
        \hline
	\end{tabular}
\end{table}

From the running results of Example 3 and 4, we can get the following phenomenon. Algorithm 3 needs least number of iteration steps to get a stable convergence rate, but takes the longest time for a given step. Algorithm 2 computes as fast or faster than Algorithm 1, probably because it has fewer network parameters. In terms of accuracy, the three algorithms show different performance results for different problems. For example, Algorithm 2 has the best variance in Example 4, but for Example 3 it can not meet the requirement of stable convergence.

\section{Conclusions}

In this paper, based on different kinds of feedback controls, we propose three algorithms for solving high-dimensional FBSDEs and construct corresponding neural networks. From the numerical results, all the three algorithms perform well for solving FBSDEs, and the relative error are less than 1\%. Algorithm 3 takes only a few steps to achieve convergence results, but each iteration may take more time. Although Algorithms 1 and 2 are computationally fast, they may require more steps to converge.

\bibliographystyle{unsrt}
\bibliography{ref}

\end{document}